\newtheorem{definition}{Definition}[section]
\newtheorem{theorem}[definition]{Theorem}
\newtheorem{lemma}[definition]{Lemma}
\newtheorem{remark}[definition]{Remark}
\newtheorem{example}[definition]{Example}
\newtheorem{proposition}[definition]{Proposition}
\def\makeCal#1{%
\expandafter\newcommand\csname c#1\endcsname{\mathcal{#1}}}
\def\makeBf#1{%
\expandafter\newcommand\csname b#1\endcsname{\mathbf{#1}}}
\def\makeBb#1{%
\expandafter\newcommand\csname m#1\endcsname{\mathbb{#1}}}
\def\makeFrak#1{%
\expandafter\newcommand\csname f#1\endcsname{\mathfrak{#1}}}
\def\makeScr#1{%
\expandafter\newcommand\csname s#1\endcsname{\mathscr{#1}}}
\edef\y{\@Alph\count@}%
\DeclareMathOperator{\Cat}{Cat}
\DeclareMathOperator{\m1}{\mathbbm{1}}
\renewcommand{\cD}{\mathcal{D}}
\title{A generating function associated with the alternating elements in the positive part of $U_q(\widehat{\mathfrak{sl}}_2)$}
\author{Chenwei Ruan}
\date{}
\begin{document}
\maketitle

\begin{abstract}
The positive part $U_q^+$ of $U_q(\widehat{\mathfrak{sl}}_2)$ admits an embedding into a $q$-shuffle algebra. This embedding was introduced by M. Rosso in 1995. In 2019, Terwilliger introduced the alternating elements $\{W_{-n}\}_{n \in \mathbb{N}}$, $\{W_{n+1}\}_{n \in \mathbb{N}}$, $\{G_{n+1}\}_{n \in \mathbb{N}}$, $\{\tilde{G}_{n+1}\}_{n \in \mathbb{N}}$ in $U_q^+$ using the Rosso embedding. He showed that the alternating elements $\{W_{-n}\}_{n \in \mathbb{N}}$, $\{W_{n+1}\}_{n \in \mathbb{N}}$, $\{\tilde{G}_{n+1}\}_{n \in \mathbb{N}}$ form a PBW basis for $U_q^+$, and he expressed $\{G_{n+1}\}_{n \in \mathbb{N}}$ in this alternating PBW basis. In his calculation, Terwilliger used some elements $\{D_n\}_{n \in \mathbb{N}}$ with the following property: the generating function $D(t)=\sum_{n \in \mathbb{N}}D_nt^n$ is the multiplicative inverse of the generating function $\tilde{G}(t)=\sum_{n \in \mathbb{N}}\tilde{G}_nt^n$ where $\tilde{G}_0=1$. Terwilliger defined $\{D_n\}_{n \in \mathbb{N}}$ recursively; in this paper, we will express $\{D_n\}_{n \in \mathbb{N}}$ in closed form. 

\bigskip
\noindent {\bf Keywords}. Catalan word; generating function; $q$-shuffle algebra. \\
{\bf 2020 Mathematics Subject Classification}. Primary: 17B37. Secondary: 05E14, 81R50. 
\end{abstract}

\section{Introduction}
The quantized enveloping algebra $U_q(\widehat{\mathfrak{sl}}_2)$ has a subalgebra $U_q^+$, called the positive part \cite{CP,lusztig}. Both $U_q(\widehat{\mathfrak{sl}}_2)$ and $U_q^+$ appear in algebra \cite{baseilhac1,beck,DF,drinfeld,jimbo}, combinatorics \cite{ariki,FM,drg,TD00,ter_alternating,ter_catalan}, mathematical physics \cite{CP2,DFJMN,JM}, and representation theory \cite{CP,FR,XXZ01}. 

\medskip
\noindent In \cite{rosso}, M. Rosso introduced an embedding of the algebra $U_q^+$ into a $q$-shuffle algebra. 

\medskip
\noindent In \cite{damiani}, I. Damiani obtained a Poincar\'e-Birkhoff-Witt (or PBW) basis for $U_q^+$. In her construction the PBW basis elements $\{E_{n\delta+\alpha_0}\}_{n \in \mN}$, $\{E_{n\delta+\alpha_1}\}_{n \in \mN}$, $\{E_{(n+1)\delta}\}_{n \in \mN}$ are defined recursively. In \cite{ter_catalan}, P. Terwilliger expressed the Damiani PBW basis elements in closed form, using the Rosso embedding of $U_q^+$. 

\medskip
\noindent In \cite{ter_alternating}, Terwilliger used the Rosso embedding to obtain a type of element in $U_q^+$, said to be alternating. The alternating elements fall into four families, denoted by $\{W_{-n}\}_{n \in \mN}$, $\{W_{n+1}\}_{n \in \mN}$, $\{G_{n+1}\}_{n \in \mN}$, $\{\tilde{G}_{n+1}\}_{n \in \mN}$. It was shown in \cite{ter_alternating} that the alternating elements $\{W_{-n}\}_{n \in \mN}$, $\{W_{n+1}\}_{n \in \mN}$, $\{\tilde{G}_{n+1}\}_{n \in \mN}$ form a PBW basis for $U_q^+$; this PBW basis is called alternating. The alternating PBW basis was used in \cite{baseilhac1,baseilhac2,ter_compact,ter_central2,ter_central,ter_beck}. 

\medskip
\noindent In \cite[Theorem 9.15]{ter_alternating}, Terwilliger expressed $\{G_{n+1}\}_{n \in \mN}$ in terms of the alternating PBW basis. His answer involved the elements $\{D_n\}_{n \in \mN}$ with the following property: the generating function $D(t)=\sum_{n \in \mN}D_nt^n$ is the multiplicative inverse of the generating function $\tilde{G}(t)=\sum_{n \in \mN}\tilde{G}_nt^n$ where $\tilde{G}_0=1$.  In \cite[Section 11]{ter_alternating}, Terwilliger used $\{D_n\}_{n \in \mN}$ to describe how the Damiani PBW basis is related to the alternating PBW basis. 

\medskip
\noindent To motivate our results, we make some comments about $D(t)$. We mentioned that $D(t)$ is the multiplicative inverse of $\tilde{G}(t)$. Using this relationship, the elements $\{D_n\}_{n \in \mN}$ can be computed recursively from $\{\tilde{G}_n\}_{n \in \mN}$. Calculation of $D_n$ for $n \leq 3$ suggests that the elements $\{D_n\}_{n \in \mN}$ admit a closed form. Our goal in this paper is to express $\{D_n\}_{n \in \mN}$ in closed form. We will state our main result shortly. 

\medskip
\noindent First, we establish some conventions and notation. 

\medskip
\noindent In this paper, $\mN=\{0,1,2,\ldots\}$ is the set of natural numbers, and $\mZ=\{0,\pm 1,\pm 2,\ldots\}$ is the set of integers. The letters $n,k,i,j,r,s$ always represent an integer. Let $\mF$ denote a field. All algebras discussed are over $\mF$, associative, and with a multiplicative identity. Let $q$ denote a nonzero scalar in $\mF$ that is not a root of unity. For $n \in \mN$, define 
\[
[n]_q=\frac{q^n-q^{-n}}{q-q^{-1}}, \hspace{4em} [n]_q^!=[n]_q[n-1]_q \cdots [1]_q.  
\]
We interpret $[0]_q^!=1$. 

\medskip
\noindent We will be looking at the positive part of $U_q(\widehat{\mathfrak{sl}}_2)$, denoted by $U_q^+$ \cite{CP,lusztig}. The algebra $U_q^+$ is defined by generators $A,B$ and the $q$-Serre relations 
\begin{equation*}\label{qserre1}
A^3B-[3]_qA^2BA+[3]_qABA^2-BA^3=0, 
\end{equation*}
\begin{equation*}\label{qserre2}
B^3A-[3]_qB^2AB+[3]_qBAB^2-AB^3=0. 
\end{equation*}
Next we recall the Rosso embedding of $U_q^+$ into a $q$-shuffle algebra \cite{rosso}. Let $x,y$ denote noncommuting indeterminates (called \textit{letters}). Let $\mV$ denote the free algebra generated by $x,y$. A product $v_1v_2 \cdots v_n$ of letters is called a \textit{word}, and $n$ is called the \textit{length} of this word. The word of length $0$ is called \textit{trivial} and denoted by $\m1$. The words form a basis for the vector space $\mV$, called the \textit{standard basis}. The vector space $\mV$ admits another algebra structure called the $q$-shuffle algebra. The $q$-shuffle algebra was first introduced by Rosso \cite{rosso1,rosso} and later reinterpreted by Green \cite{green}. The $q$-shuffle product, denoted by $\star$, is defined recursively as follows: 
\begin{itemize}
\item For $v \in \mV$, 
	\begin{equation*}\label{star1}
	\m1 \star v=v \star \m1=v. 
	\end{equation*}
\item For the letters $u,v$, 
	\begin{equation*}\label{star2}
	u \star v=uv+vuq^{\langle u,v \rangle}, 
	\end{equation*}	
	where 
	 \[
	\langle x,x \rangle=\langle y,y \rangle =2, \hspace{4em}\langle x,y \rangle=\langle y,x \rangle=-2.
	\]
\item For a letter $u$ and a nontrivial word $v=v_1v_2 \cdots v_n$ in $\mV$, 
	\begin{equation*}\label{star3.1}
	u \star v=\sum_{i=0}^n v_1 \cdots v_iuv_{i+1} \cdots v_n q^{\langle u,v_1 \rangle+\cdots+\langle u,v_i \rangle}, 
	\end{equation*}
	\begin{equation*}\label{star3.2}
	v \star u=\sum_{i=0}^n v_1 \cdots v_iuv_{i+1} \cdots v_n q^{\langle u,v_n \rangle+\cdots+\langle u,v_{i+1} \rangle}. 
	\end{equation*}
\item For nontrivial words $u=u_1u_2 \cdots u_r$ and $v=v_1v_2 \cdots v_s$ in $\mV$, 
	\begin{equation*}\label{star4.1}
	u \star v=u_1((u_2 \cdots u_r) \star v)+v_1(u \star (v_2 \cdots v_s))q^{\langle v_1,u_1 \rangle+\cdots+\langle v_1,u_r \rangle}, 
	\end{equation*}
	\begin{equation*}\label{star4.2}
	u \star v=(u \star (v_1 \cdots v_{s-1}))v_s+((u_1 \cdots u_{r-1}) \star v)u_rq^{\langle u_r,v_1 \rangle+\cdots+\langle u_r,v_s \rangle}. 
	\end{equation*}
\end{itemize}
Note that the $q$-shuffle product of two words of length $l_1,l_2$ is a linear combination of words of length $l_1+l_2$. 

\medskip
\noindent Green showed in \cite{green} that $x,y$ satisfy the $q$-Serre relations in the $q$-shuffle algebra $\mV$: 
\begin{equation*}\label{starqserre1}
x \star x \star x \star y-[3]_qx \star x \star y \star x+[3]_qx \star y \star x \star x-y \star x \star x \star x=0, 
\end{equation*}
\begin{equation*}\label{starqserre2}
y \star y \star y \star x-[3]_qy \star y \star x \star y+[3]_qy \star x \star y \star y-x \star y \star y \star y=0. 
\end{equation*}
As a result there exists an algebra homomorphism $\natural$ from $U_q^+$ to the $q$-shuffle algebra $\mV$ that sends $A \mapsto x,B \mapsto y$. The map $\natural$ is injective by \cite[Theorem 15]{rosso}. Let $U$ denote the subalgebra of the $q$-shuffle algebra $\mV$ generated by $x,y$. By construction, the image of $\natural$ is $U$. 

\medskip
\noindent We now mention some special words in $\mV$ that will be useful later. 

\begin{definition}\label{def:alternating}\rm
(See \cite[Definition 5.2, Lemma 5.4]{ter_alternating}.) We define $G_0=\tilde{G}_0=\m1$. 

\medskip
\noindent For $n \in \mN$, define
\[G_{n+1}=G_n yx,\hspace{2em} \tilde{G}_{n+1}=\tilde{G}_n xy,\]
\[W_{-n}=\tilde{G}_nx,\hspace{2em}  W_{n+1}=y\tilde{G}_n.\] 

\medskip
\noindent The words $\{W_{-n}\}_{n \in \mN}$, $\{W_{n+1}\}_{n \in \mN}$, $\{G_{n}\}_{n \in \mN}$, $\{\tilde{G}_{n}\}_{n \in \mN}$ are called \textit{alternating}. 
\end{definition}

\begin{example}\label{ex:alternating}\rm
We have 
\begin{equation*}
\begin{split}
&W_0=x, \hspace{2em} W_{-1}=xyx, \hspace{2em} W_{-2}=xyxyx, \hspace{2em} \ldots \\
&W_1=y, \hspace{2em} W_2=yxy, \hspace{2em} W_3=yxyxy, \hspace{2em} \ldots \\
&G_1=yx, \hspace{2em} G_2=yxyx, \hspace{2em} G_3=yxyxyx, \hspace{2em} \ldots \\
&\tilde{G}_1=xy, \hspace{2em} \tilde{G}_2=xyxy, \hspace{2em} \tilde{G}_3=xyxyxy, \hspace{2em} \ldots
\end{split}
\end{equation*}
\end{example}

\noindent By \cite[Theorem 8.3]{ter_alternating}, the alternating words are contained in $U$. 

\medskip
\noindent It is shown in \cite[Proposition 5.10]{ter_alternating} that with respect to $\star$, $\{W_{-n}\}_{n \in \mN}$ mutually commute, $\{W_{n+1}\}_{n \in \mN}$ mutually commute, $\{G_{n}\}_{n \in \mN}$ mutually commute, and $\{\tilde{G}_{n}\}_{n \in \mN}$ mutually commute. Furthermore, by \cite[Theorem 10.1]{ter_alternating} the alternating words $\{W_{-n}\}_{n \in \mN}$, $\{W_{n+1}\}_{n \in \mN}$, $\{\tilde{G}_{n+1}\}_{n \in \mN}$ form a PBW basis for $U$. 

\medskip
\noindent In this paper, we focus on the alternating words $\{\tilde{G}_n\}_{n \in \mN}$. Consider their generating function 
\[\tilde{G}(t)=\sum_{n \in \mN} \tilde{G}_nt^n.\]
We will be discussing the multiplicative inverse of $\tilde{G}(t)$ with respect to $\star$. We now introduce this inverse. 

\begin{definition}\label{def:Dt}\rm
(See \cite[Definition 9.5]{ter_alternating}.) We define the elements $\{D_n\}_{n \in \mN}$ of $U$ in the following recursive way: 
\begin{equation}\label{convolution}
D_0=\m1, \hspace{4em} D_n=-\sum_{k=0}^{n-1}D_k \star \tilde{G}_{n-k} \hspace{1.5em} (n \geq 1). 
\end{equation}
Define the generating function 
\begin{equation*}\label{eq:Dt}
D(t)=\sum_{n \in \mN} D_nt^n.
\end{equation*}
\end{definition}

\begin{lemma}\label{lem:Dtinverse}\rm
(See \cite[Lemma 4.1]{ter_conjecture}.) The generating function $D(t)$ is the multiplicative inverse of $\tilde{G}(t)$ with respect to $\star$. In other words, 
\begin{equation}\label{eq:Dtinverse}
\tilde{G}(t) \star D(t)=\m1=D(t) \star \tilde{G}(t).
\end{equation}
\end{lemma}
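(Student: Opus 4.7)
The plan is to prove the two equalities in (\ref{eq:Dtinverse}) separately. The equality $D(t) \star \tilde{G}(t) = \m1$ will fall out directly from the recursion (\ref{convolution}), while $\tilde{G}(t) \star D(t) = \m1$ requires the extra ingredient that the $\{\tilde{G}_n\}_{n \in \mN}$ mutually commute with respect to $\star$, as stated earlier via \cite[Proposition 5.10]{ter_alternating}.

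For the first equality, I would compute the coefficient of $t^n$ in $D(t) \star \tilde{G}(t)$. Since $\star$ is bilinear, this coefficient equals $\sum_{k=0}^{n} D_k \star \tilde{G}_{n-k}$. For $n=0$, this is $D_0 \star \tilde{G}_0 = \m1 \star \m1 = \m1$, using $\tilde{G}_0 = \m1$ and the identity property of $\m1$ under $\star$. For $n \geq 1$, split off the $k=n$ term to get
\begin{equation*}
\sum_{k=0}^{n} D_k \star \tilde{G}_{n-k} = D_n \star \m1 + \sum_{k=0}^{n-1} D_k \star \tilde{G}_{n-k} = D_n + \sum_{k=0}^{n-1} D_k \star \tilde{G}_{n-k},
\end{equation*}
which is $0$ by the defining recursion (\ref{convolution}). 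Hence $D(t) \star \tilde{G}(t) = \m1$.

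For the second equality $\tilde{G}(t) \star D(t) = \m1$, my strategy is to show that each $D_n$ lies in the (commutative) subalgebra of $(U,\star)$ generated by $\{\tilde{G}_k\}_{k \in \mN}$, and hence commutes with every $\tilde{G}_k$ under $\star$. I will do this by strong induction on $n$: the base case $D_0 = \m1$ is trivial, and the inductive step is immediate from (\ref{convolution}), which expresses $D_n$ as a $\star$-linear combination of products $D_k \star \tilde{G}_{n-k}$ with $k < n$; by the inductive hypothesis, each $D_k$ is a $\star$-polynomial in the $\tilde{G}_j$'s, hence so is $D_n$. Because $\{\tilde{G}_n\}_{n \in \mN}$ mutually commute under $\star$, every element in this subalgebra commutes with every $\tilde{G}_k$ under $\star$. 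Consequently $\tilde{G}(t) \star D(t) = D(t) \star \tilde{G}(t) = \m1$, completing the proof.

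The main obstacle, if any, is just being careful about the second equality, since the recursion (\ref{convolution}) only manifestly encodes one-sided convolution. The resolution via the commutativity of $\{\tilde{G}_n\}_{n \in \mN}$ is routine once stated, and no genuinely new computation is required beyond invoking \cite[Proposition 5.10]{ter_alternating}.
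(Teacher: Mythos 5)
Your proposal is correct, and it fills in exactly the details that the paper's one-line proof (``can be checked routinely using \eqref{convolution}'') leaves to the reader: the first equality is the recursion itself, and the second requires knowing that $D_n$ commutes with each $\tilde{G}_m$ under $\star$, which you correctly derive from the recursion together with the mutual commutativity of the $\tilde{G}_n$. Your inductive claim that $D_n$ is a $\star$-polynomial in the $\tilde{G}_j$ is precisely the content of Proposition \ref{prop:Dnpoly}, and the resulting commutation is Proposition \ref{prop:Dncomm}, so your argument is fully consistent with the paper.
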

\begin{proof}
The relation \eqref{eq:Dtinverse} can be checked routinely using \eqref{convolution}. 
\end{proof}

\noindent For $n \in \mN$ we can calculate $D_n$ recursively using \eqref{convolution}. 

\begin{example}\label{ex:Dn}\rm
We list $D_n$ for $0 \leq n \leq 3$. 
\[
D_0=\m1, \hspace{4em} D_1=-xy, \hspace{4em} D_2=xyxy+[2]_q^2xxyy, 
\]
\[
D_3=-xyxyxy-[2]_q^2xxyyxy-[2]_q^2xyxxyy-[2]_q^4xxyxyy-[2]_q^2[3]_q^2xxxyyy. 
\]
\end{example}

\noindent We are going to obtain a closed formula for $D_n$. 

\medskip
\noindent To motivate the formula, let us examine Example \ref{ex:Dn}. We can see that each $D_n$ is a linear combination of words of length $2n$, and each coefficient is equal to $(-1)^n$ times a square. Furthermore, the words appearing in the linear combination have a certain type said to be Catalan. We now recall the definition of a Catalan word. 

\begin{definition}\label{def:Cat}\rm
(See \cite[Definition 1.3]{ter_catalan}.) Define $\overline{x}=1$ and $\overline{y}=-1$. A word $a_1 \cdots a_k$ is \textit{Catalan} whenever $\overline{a}_1+\cdots+\overline{a}_i \geq 0$ for $1 \leq i \leq k-1$ and $\overline{a}_1+\cdots+\overline{a}_k=0$. The length of a Catalan word is always even. For $n \in \mN$, let $\Cat_n$ denote the set of all Catalan words of length $2n$. 
\end{definition}

\begin{example}\label{ex:Catn}\rm
We describe $\Cat_n$ for $0 \leq n \leq 3$. 
\[
\Cat_0=\{\m1\}, \hspace{4em} \Cat_1=\{xy\}, \hspace{4em} \Cat_2=\{xyxy,xxyy\},
\]
\[
\Cat_3=\{xyxyxy,xxyyxy,xyxxyy,xxyxyy,xxxyyy\}.
\]
\end{example}

\noindent We observe that for $0 \leq n \leq 3$ each $D_n$ is a linear combination of Catalan words of length $2n$. We now show that this observation is true for all $n \in \mN$. 

\begin{proposition}\label{prop:Dndecomp}\rm
For $n \in \mN$, $D_n$ is contained in the span of $\Cat_n$. 
\end{proposition}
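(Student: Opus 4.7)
The plan is induction on $n$. The base case $n = 0$ is immediate, since $D_0 = \m1$ is the unique element of $\Cat_0$. For the inductive step, assume $D_k$ lies in the span of $\Cat_k$ for every $0 \leq k \leq n-1$. By the recursion \eqref{convolution}, $D_n$ is a linear combination of products $D_k \star \tilde{G}_{n-k}$ with $0 \leq k \leq n-1$. Since $\tilde{G}_{n-k} = (xy)^{n-k}$ is itself a Catalan word in $\Cat_{n-k}$, the proposition reduces to the following key claim: for any Catalan words $u \in \Cat_r$ and $v \in \Cat_s$, the $q$-shuffle product $u \star v$ lies in the span of $\Cat_{r+s}$.

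To prove this key claim, I would first establish the standard shuffle expansion
\[
u \star v = \sum_w c_w\, w,
\]
where $w$ ranges over all words obtained by interleaving the letter sequences of $u$ and $v$ (preserving the internal order of each), and each $c_w$ is a monomial in $q$. This is a routine induction on $|u|+|v|$ using the four recursive identities in the definition of $\star$. Granted this expansion, the key claim follows from a combinatorial observation: every interleaving of two Catalan words is Catalan. Indeed, write $u = a_1 \cdots a_{2r}$ and $v = b_1 \cdots b_{2s}$, and let $w$ be such an interleaving. Any prefix of $w$ of length $i$ decomposes into a prefix $a_1 \cdots a_{r_0}$ of $u$ and a prefix $b_1 \cdots b_{s_0}$ of $v$, so
\[
\overline{w}_1 + \cdots + \overline{w}_i \;=\; (\overline{a}_1 + \cdots + \overline{a}_{r_0}) + (\overline{b}_1 + \cdots + \overline{b}_{s_0}) \;\geq\; 0
\]
by the Catalan property of $u$ and $v$, and the total sum over $w$ vanishes by balance. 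Hence $w \in \Cat_{r+s}$.

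The main obstacle is the shuffle expansion formula for $\star$. Although well known for the Rosso $q$-shuffle algebra, it requires carefully threading the $q$-exponents through the four recursive cases in the definition of $\star$. For the purposes of this proposition, however, only the support on interleavings matters; the precise $q$-coefficients are irrelevant to whether the resulting words are Catalan, which is why the combinatorial step above is all that is needed to conclude.
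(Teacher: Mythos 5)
Your proof is correct and follows essentially the same route as the paper: induction on $n$ via the recursion \eqref{convolution}, combined with the observation that $\tilde{G}_{n-k}$ is Catalan and that the $q$-shuffle product of two Catalan words is supported on Catalan words. The only difference is that the paper states this last closure property without proof, while you supply the (correct) justification via the interleaving expansion of $\star$.
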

\begin{proof}
For $n \in \mN$, by Definition \ref{def:alternating} we have that $\tilde{G}_n=xyxy \cdots xy$ where the $xy$ is repeated $n$ times. The word $\tilde{G}_n$ is Catalan by Definition \ref{def:Cat}. Note that the $q$-shuffle product of two Catalan words is a linear combination of Catalan words. The result follows by \eqref{convolution} and induction on $n$. 
\end{proof}

\begin{definition}\label{def:Dn}\rm
For $n \in \mN$ and a word $w \in \Cat_n$, let $(-1)^nD(w)$ denote the coefficient of $w$ in $D_n$. In other words, 
\begin{equation}\label{eq:Dn}
D_n=(-1)^n\sum_{w \in \Cat_n}D(w)w.
\end{equation}
\end{definition}

\begin{example}\rm
In the table below, we list the Catalan words $w$ of length $\leq 6$ and the corresponding $D(w)$. 
\begin{center}
\begin{tabular}{ c|ccccccccc } 
$w$ & $\m1$ & $xy$ & $xyxy$ & $xxyy$ & $xyxyxy$ & $xxyyxy$ & $xyxxyy$ & $xxyxyy$ & $xxxyyy$\\
\hline
$D(w)$ & 1 & 1 & 1 & $[2]_q^2$ & 1 & $[2]_q^2$ & $[2]_q^2$ & $[2]_q^4$ & $[2]_q^2[3]_q^2$
\end{tabular}
\end{center}
\end{example}

\noindent By \eqref{eq:Dn}, our goal of finding a closed formula for $D_n$ reduces to finding a closed formula for $D(w)$ where $w$ is Catalan. The following is the main theorem of this paper. 

\begin{theorem}\label{thm:main}\rm
For $n \in \mN$ and a word $w=a_1 \cdots a_{2n} \in \Cat_n$, we have 
\begin{equation}\label{eq:Dw}
D(w)=\prod_{i=1}^{2n}\left[\overline{a}_1+\cdots+\overline{a}_{i-1}+(\overline{a}_i+1)/2\right]_q.
\end{equation}
Moreover, 
\begin{equation}\label{eq:Dw1}
D(w)=E(w)^2,
\end{equation}
where
\begin{equation}\label{eq:Dw2}
E(w)=\prod_{\substack{1 \leq i \leq 2n \\ a_i=x}}[\overline{a}_1+\cdots+\overline{a}_i]_q=\prod_{\substack{1 \leq i \leq 2n \\ a_i=y}}[\overline{a}_1+\cdots+\overline{a}_{i-1}]_q.
\end{equation}
\end{theorem}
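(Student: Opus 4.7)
The plan is to prove \eqref{eq:Dw} by induction on $n$ using the recursion \eqref{convolution}, and then deduce \eqref{eq:Dw1} and \eqref{eq:Dw2} by direct manipulation of the product formula together with a simple observation about Dyck paths. The main obstacle will be verifying a weighted combinatorial identity on sub-Dyck-paths of $w$ in the inductive step.

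For the deduction of \eqref{eq:Dw1} and \eqref{eq:Dw2} from \eqref{eq:Dw}, set $h_j=\overline{a}_1+\cdots+\overline{a}_j$ and observe that the factor at position $i$ on the right-hand side of \eqref{eq:Dw} equals $[h_i]_q$ when $a_i=x$ and $[h_{i-1}]_q$ when $a_i=y$. Hence
\[
D(w)=\prod_{a_i=x}[h_i]_q\cdot\prod_{a_i=y}[h_{i-1}]_q.
\]
The two expressions for $E(w)$ in \eqref{eq:Dw2} coincide because the multisets $\{h_i:a_i=x\}$ and $\{h_{i-1}:a_i=y\}$ agree: in the Dyck path of $w$, each positive level $h$ is entered from below by some $x$-step and exited from above by some $y$-step the same number of times, since the path begins and ends at height $0$. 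Both products therefore equal $E(w)$, and \eqref{eq:Dw1} follows from $D(w)=E(w)\cdot E(w)$.

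For \eqref{eq:Dw} itself I would proceed by induction on $n$. The base case $n=0$ is immediate. For the inductive step, let $\hat{D}(w)$ denote the right-hand side of \eqref{eq:Dw} and fix $w\in\Cat_n$ with $n\geq 1$. Extracting the coefficient of $w$ from both sides of \eqref{convolution}, invoking the inductive hypothesis and Proposition \ref{prop:Dndecomp}, and using the standard description of the $q$-shuffle product as a sum over order-preserving interleavings weighted by crossings of the bilinear form $\langle\cdot,\cdot\rangle$, this reduces to the identity
\[
\hat{D}(w)=\sum_{k=0}^{n-1}(-1)^{n-k-1}\sum_{S}q^{c(S)}\,\hat{D}(w|_S),
\]
where $S$ ranges over subsets of $\{1,\ldots,2n\}$ with $|S|=2k$, $w|_S\in\Cat_k$, $w|_{S^c}=(xy)^{n-k}$, and $c(S)=\sum_{p\in S^c,\,q\in S,\,q<p}\langle a_q,a_p\rangle$.

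The main obstacle is establishing this weighted identity. A natural approach is a structural induction along the first-return decomposition $w=xw_1yw_2$ of a nontrivial Catalan word, where $w_1\in\Cat_j$ and $w_2\in\Cat_{n-1-j}$ for a unique $j$. This decomposition induces a factorization $\hat{D}(w)=\hat{D}^{+1}(w_1)\,\hat{D}(w_2)$, where $\hat{D}^{+1}$ is $\hat{D}$ with all heights shifted up by $1$, and it partitions embeddings of $(xy)^{n-k}$ into $w$ by how many of the $xy$-pairs fall inside $w_1$, inside $w_2$, or straddle the outer $x$ and $y$. Tracking the $q$-crossing weights through this decomposition should reduce the sum to elementary $q$-integer identities of binomial type; the delicate part is keeping the crossings that straddle blocks under control.
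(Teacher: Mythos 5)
Your derivation of \eqref{eq:Dw1} and \eqref{eq:Dw2} from \eqref{eq:Dw} is correct and essentially matches the paper's Lemmas \ref{rem:Dw} and \ref{rem:Dwprofile0}; your multiset argument (each level is reached by an up-step exactly as often as it is left by a down-step) is in fact a little more direct than the paper's detour through profiles and Lemma \ref{lem:Dwprofile0}. However, the proof of \eqref{eq:Dw} itself has a genuine gap: everything is staked on the displayed weighted identity over interleavings of $w$ into a Catalan subword $w|_S$ and copies of $xy$, and that identity is exactly the content of the theorem --- you reduce the problem to it correctly (modulo a sign/ordering slip in $c(S)$: in $D_k \star \tilde{G}_{n-k}$ the crossing weight is accumulated over pairs in which the $\tilde{G}$-letter precedes the $D_k$-letter, not the reverse), but you do not prove it. The first-return decomposition $w = xw_1yw_2$ does not obviously tame it, because the positions of $S^c$ forming the copies of $(xy)^{n-k}$ can straddle the blocks $w_1$ and $w_2$ in many ways, the crossing exponent $c(S)$ does not factor over the blocks, and one must also control which $S$ yield a Catalan restriction; ``should reduce to elementary $q$-integer identities'' is precisely the step that needs an argument.

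The paper avoids this combinatorial explosion entirely, and the comparison is instructive. Rather than verifying the convolution \eqref{convolution} directly for the candidate coefficients $\cD(w)$, it first proves the one-letter recursion \eqref{eq:Dnrecursion}, namely $\cD_n=\tfrac{1}{q-q^{-1}}(q^{-1}\cD_{n-1}\star x-qx\star\cD_{n-1})y$; extracting the coefficient of a fixed $v\in\Cat_n$ here involves only single insertions of $x$, and the required identity becomes the telescoping sum of Lemma \ref{lem:Dprofile}, in which the coefficient $[h_{j+1}]_q^2-[l_j]_q^2=\sum_{s=l_j}^{h_{j+1}-1}[1+2s]_q$ collapses along the profile. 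The full convolution identity is then deduced algebraically from this one-step recursion (Lemma \ref{lem:inverse}, using $q\tilde{G}_k\star x=(q-q^{-1})W_{-k}+q^{-1}x\star\tilde{G}_k$ and a telescoping sum over $k$), and the closed form follows from uniqueness of the $\star$-inverse of $\tilde{G}(t)$. If you want to salvage your direct approach, you would need to actually establish your interleaving identity; alternatively, you could adopt the paper's strategy of interposing a simpler recursion that the candidate visibly satisfies and showing it forces the convolution.
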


\begin{remark}\rm
There is a striking resemblance between \eqref{eq:Dw} and \cite[Definition 2.5]{ter_catalan}. While not explicitly used in our proofs, this resemblance did motivate our proof techniques and our interest in this entire topic. 
\end{remark}


\section{The proof of Theorem \ref{thm:main}}
In this section, we prove Theorem \ref{thm:main}. 

\begin{definition}\label{def:cDw}\rm
For $n \in \mN$ and a word $w=a_1 \cdots a_{2n} \in \Cat_n$, we define 
\begin{equation*}\label{Dw}
\cD(w)=\prod_{i=1}^{2n}\left[\overline{a}_1+\cdots+\overline{a}_{i-1}+(\overline{a}_i+1)/2\right]_q,
\end{equation*}
\[\cD_x(w)=\prod_{\substack{1 \leq i \leq 2n \\ a_i=x}}[\overline{a}_1+\cdots+\overline{a}_i]_q,\]
\[\cD_y(w)=\prod_{\substack{1 \leq i \leq 2n \\ a_i=y}}[\overline{a}_1+\cdots+\overline{a}_{i-1}]_q.\]
\end{definition}

\noindent In order to prove Theorem \ref{thm:main}, we establish the following for all Catalan words $w$: 
\begin{enumerate}[(i)]
\item $\cD(w)=D(w)$; 
\item $\cD(w)=\cD_x(w)\cD_y(w)$; 
\item $\cD_x(w)=\cD_y(w)$. 
\end{enumerate}

\noindent Item (i) will be achieved in Theorem \ref{thm:DD}. 

\medskip
\noindent Item (ii) will be achieved in Lemma \ref{rem:Dw}. 

\medskip
\noindent Item (iii) will be achieved in Lemma \ref{rem:Dwprofile0}. 

\begin{lemma}\label{rem:Dw}\rm
For any Catalan word $w$, we have 
\[
\cD(w)=\cD_x(w)\cD_y(w). 
\]
\end{lemma}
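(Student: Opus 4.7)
The plan is to prove this by a direct computation that splits the product defining $\cD(w)$ according to whether the $i$-th letter is $x$ or $y$. The whole point is that the exponent shift $(\overline{a}_i+1)/2$ is engineered precisely so that it takes the value $1$ when $a_i=x$ and the value $0$ when $a_i=y$; once this is observed, the two subproducts match $\cD_x(w)$ and $\cD_y(w)$ respectively.

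First I would partition the index set $\{1,2,\ldots,2n\}$ as $I_x\sqcup I_y$, where $I_x=\{i:a_i=x\}$ and $I_y=\{i:a_i=y\}$, and rewrite
\[
\cD(w)=\prod_{i\in I_x}\bigl[\overline{a}_1+\cdots+\overline{a}_{i-1}+(\overline{a}_i+1)/2\bigr]_q\;\prod_{i\in I_y}\bigl[\overline{a}_1+\cdots+\overline{a}_{i-1}+(\overline{a}_i+1)/2\bigr]_q.
\]
For $i\in I_x$, we have $\overline{a}_i=1$, so $(\overline{a}_i+1)/2=1$ and the factor becomes $[\overline{a}_1+\cdots+\overline{a}_{i-1}+1]_q=[\overline{a}_1+\cdots+\overline{a}_i]_q$, matching the $i$-th factor of $\cD_x(w)$. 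For $i\in I_y$, we have $\overline{a}_i=-1$, so $(\overline{a}_i+1)/2=0$ and the factor becomes $[\overline{a}_1+\cdots+\overline{a}_{i-1}]_q$, matching the $i$-th factor of $\cD_y(w)$.

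Multiplying the two subproducts yields $\cD_x(w)\cD_y(w)$, which gives the claim. There is no real obstacle here: the identity is essentially a bookkeeping statement about the case analysis on $a_i\in\{x,y\}$, and the Catalan hypothesis on $w$ is not even needed for the factorization itself (though it will of course be needed later, in Lemma~\ref{rem:Dwprofile0}, to ensure all the bracketed integers are nonnegative and to prove the further identity $\cD_x(w)=\cD_y(w)$).
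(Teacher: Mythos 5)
Your proposal is correct and matches the paper's argument exactly: the paper's one-line proof also rests on the observation that $(\overline{x}+1)/2=1$ and $(\overline{y}+1)/2=0$, so the product in Definition~\ref{def:cDw} splits into the $x$-factors and $y$-factors. You have simply written out the bookkeeping in more detail.
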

\begin{proof}
Note that $(\overline{x}+1)/2=1$ and $(\overline{y}+1)/2=0$, so the result follows by Definition \ref{def:cDw}. 
\end{proof}


\noindent Next we will show item (iii). In order to do this, we now recall the concept of elevation sequences and profiles. 

\begin{definition}\label{def:elevation_sequence}\rm
(See \cite[Definition 2.6]{ter_catalan}.) For $n \in \mN$ and a word $w=a_1 \cdots a_{n}$, its \textit{elevation sequence} is $(e_0,\ldots,e_{n})$, where $e_i=\overline{a}_0+\cdots+\overline{a}_i$ for $0 \leq i \leq n$. 
\end{definition}

\begin{example}\rm
In the table below, we list the Catalan words $w$ of length $\leq 6$ and the corresponding elevation sequences. 
\begin{center}
\begin{tabular}{ c|c } 
$w$ & elevation sequence of $w$\\
\hline
$\m1$ & $(0)$\\
$xy$ & $(0,1,0)$\\
$xyxy$ & $(0,1,0,1,0)$\\
$xxyy$ & $(0,1,2,1,0)$\\
$xyxyxy$ & $(0,1,0,1,0,1,0)$\\
$xxyyxy$ & $(0,1,2,1,0,1,0)$\\
$xyxxyy$ & $(0,1,0,1,2,1,0)$\\
$xxyxyy$ & $(0,1,2,1,2,1,0)$\\
$xxxyyy$ & $(0,1,2,3,2,1,0)$
\end{tabular}
\end{center}
\end{example}

\begin{definition}\label{def:profile}\rm
(See \cite[Definition 2.8]{ter_catalan}.) For $n \in \mN$ and a word $w=a_1 \cdots a_{n}$, its \textit{profile} is the subsequence of its elevation sequence consisting of the $e_i$ that satisfy one of the following conditions: 
\begin{itemize}
\item $i=0$; 
\item $i=n$; 
\item $1 \leq i \leq n-1$ and $e_{i+1}-e_i \neq e_i-e_{i-1}$. 
\end{itemize}
In other words, the profile of a word $w$ is the subsequence of the elevation sequence of $w$ consisting of the end points and turning points. 

\medskip
\noindent By a \textit{Catalan profile} we mean the profile of a Catalan word. 
\end{definition}

\begin{example}\rm
In the table below, we list the Catalan words $w$ of length $\leq 6$ and the corresponding profiles. 
\begin{center}
\begin{tabular}{ c|c } 
$w$ & profile of $w$\\
\hline
$\m1$ & $(0)$\\
$xy$ & $(0,1,0)$\\
$xyxy$ & $(0,1,0,1,0)$\\
$xxyy$ & $(0,2,0)$\\
$xyxyxy$ & $(0,1,0,1,0,1,0)$\\
$xxyyxy$ & $(0,2,0,1,0)$\\
$xyxxyy$ & $(0,1,0,2,0)$\\
$xxyxyy$ & $(0,2,1,2,0)$\\
$xxxyyy$ & $(0,3,0)$\\
\end{tabular}
\end{center}
\end{example}

\begin{lemma}\label{lem:Dwprofile0}\rm
For a Catalan word $w$ with profile $(l_0,h_1,l_1,\ldots,h_r,l_r)$, we have
\[ 
\cD_x(w)=\dfrac{[h_1]_q^!\cdots[h_r]_q^!}{[l_0]_q^!\cdots[l_r]_q^!},
\]
\[
\cD_y(w)=\dfrac{[h_1]_q^!\cdots[h_r]_q^!}{[l_0]_q^!\cdots[l_r]_q^!}.
\]
\end{lemma}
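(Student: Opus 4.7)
The plan is to decompose $w$ into its maximal monotone runs, as dictated by the profile. By Definition \ref{def:profile}, at every index $i$ not in the profile we have $e_{i+1}-e_i=e_i-e_{i-1}$, so the consecutive differences $e_{i+1}-e_i \in \{+1,-1\}$ are constant between two neighboring profile entries. Hence the corresponding portion of $w$ is a block of identical letters: the portion whose elevation rises from $l_{j-1}$ to $h_j$ is a run of $h_j-l_{j-1}$ copies of $x$, and the portion whose elevation descends from $h_j$ to $l_j$ is a run of $h_j-l_j$ copies of $y$.

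Given this decomposition, the contributions to the products in Definition \ref{def:cDw} can be read off run by run. Only ascending runs contribute to $\cD_x(w)$: in the $j$th ascending run, the values $e_i$ at positions $i$ with $a_i=x$ are exactly $l_{j-1}+1,l_{j-1}+2,\ldots,h_j$, giving a contribution of $[h_j]_q^!/[l_{j-1}]_q^!$. Similarly, only descending runs contribute to $\cD_y(w)$, and the $j$th descending run contributes $[h_j]_q^!/[l_j]_q^!$, since $e_{i-1}$ sweeps through $h_j,h_j-1,\ldots,l_j+1$ as $i$ ranges over the $y$-positions. Taking products over $j=1,\ldots,r$ yields
\[
\cD_x(w)=\prod_{j=1}^r \frac{[h_j]_q^!}{[l_{j-1}]_q^!}, \qquad \cD_y(w)=\prod_{j=1}^r \frac{[h_j]_q^!}{[l_j]_q^!}.
\]
Since $w$ is Catalan we have $l_0=l_r=0$, so $[l_0]_q^!=[l_r]_q^!=1$; inserting the missing factor $[l_r]_q^!$ in the first denominator (respectively $[l_0]_q^!$ in the second) collapses both expressions to the stated common form $[h_1]_q^!\cdots[h_r]_q^!/([l_0]_q^!\cdots[l_r]_q^!)$.

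I do not anticipate any real obstacle; the argument is essentially bookkeeping based on the monotone-run decomposition. The only point requiring care is to line up the indexing of the $l_j$'s and $h_j$'s with ascending versus descending runs, and then to invoke the Catalan boundary condition $l_0=l_r=0$ to reconcile the two products.
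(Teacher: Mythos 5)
Your proof is correct and complete. The paper itself gives only a one-line justification, citing an external result (\cite[Lemma 2.10]{ter_catalan}, which records how the elevation data of a Catalan word is organized by its profile) followed by ``direct computation''; you have instead supplied that direct computation in full. The key points all check out: for a nonempty Catalan word the first letter is $x$ and the last is $y$, so the profile genuinely alternates $(l_0,h_1,l_1,\ldots,h_r,l_r)$ with ascending runs of $x$'s and descending runs of $y$'s; the $j$th ascending run contributes $[l_{j-1}+1]_q\cdots[h_j]_q=[h_j]_q^!/[l_{j-1}]_q^!$ to $\cD_x(w)$ and the $j$th descending run contributes $[h_j]_q\cdots[l_j+1]_q=[h_j]_q^!/[l_j]_q^!$ to $\cD_y(w)$; and the boundary condition $l_0=l_r=0$ (with $[0]_q^!=1$) reconciles the two products into the common stated form. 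The only thing your write-up buys beyond the paper's is self-containedness — a reader need not consult \cite{ter_catalan} — at the cost of a bit more bookkeeping; the underlying monotone-run decomposition is exactly what the cited lemma encodes, so the two arguments are substantively the same. The degenerate case $r=0$ (the trivial word $\m1$, all products empty) is also consistent with your formulas, though it is worth a passing mention.
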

\begin{proof}
Follows from \cite[Lemma 2.10]{ter_catalan} by direct computation. 
\end{proof}

\begin{lemma}\label{rem:Dwprofile0}\rm
For any Catalan word $w$, we have 
\[\cD_x(w)=\cD_y(w).\]
\end{lemma}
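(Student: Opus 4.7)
The plan is to read the equality directly off Lemma \ref{lem:Dwprofile0}. That lemma asserts that both $\cD_x(w)$ and $\cD_y(w)$ are given by the single expression
\[
\dfrac{[h_1]_q^! \cdots [h_r]_q^!}{[l_0]_q^! \cdots [l_r]_q^!},
\]
where $(l_0,h_1,l_1,\ldots,h_r,l_r)$ is the profile of the Catalan word $w$. Since both quantities are equal to the same symbol-for-symbol expression, the claim $\cD_x(w)=\cD_y(w)$ follows with no further work.

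Conceptually, this is a statement about the symmetry of a Catalan word under reflection: the $x$-letters at heights $e_i$ and the $y$-letters at heights $e_{i-1}$ contribute equally to the corresponding product of $q$-integers, because each step up must be matched by a step down at the same height. The profile encodes this matching precisely, by packaging runs of consecutive up-steps into the peak heights $h_j$ and runs of consecutive down-steps into the valley heights $l_j$. Lemma \ref{lem:Dwprofile0} is doing the genuine combinatorial work for us, distilling these heights into the quotient of $q$-factorials displayed above.

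Since the heavy lifting is deferred to the preceding lemma, there is no real obstacle in this step; the only thing to do is cite Lemma \ref{lem:Dwprofile0} twice and conclude. Combined with Lemma \ref{rem:Dw} (which gives $\cD(w)=\cD_x(w)\cD_y(w)$) and the eventual identification $\cD(w)=D(w)$ from Theorem \ref{thm:DD}, this completes the third and final ingredient needed for Theorem \ref{thm:main}, producing the square $D(w)=E(w)^2$ with $E(w)=\cD_x(w)=\cD_y(w)$.
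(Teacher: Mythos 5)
Your proof is correct and matches the paper's own argument exactly: the paper also derives the equality $\cD_x(w)=\cD_y(w)$ as an immediate consequence of Lemma \ref{lem:Dwprofile0}, which gives both quantities by the same formula in terms of the profile of $w$.
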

\begin{proof}
Follows from Lemma \ref{lem:Dwprofile0}. 
\end{proof}

\begin{lemma}\label{lem:Dwprofile}\rm
For $n \in \mN$ and a word $w \in \Cat_n$ with profile $(l_0,h_1,l_1,\ldots,h_r,l_r)$, we have
\[\cD(w)=\cD_x(w)^2=\cD_y(w)^2=\left(\dfrac{[h_1]_q^!\cdots[h_r]_q^!}{[l_0]_q^!\cdots[l_r]_q^!}\right)^2.\]
\end{lemma}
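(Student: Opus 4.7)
The plan is to package together the three results already established earlier in this section. First, I would apply Lemma \ref{rem:Dw}, which gives $\cD(w) = \cD_x(w)\cD_y(w)$ for any Catalan word $w$. Next, I would apply Lemma \ref{rem:Dwprofile0}, which asserts $\cD_x(w) = \cD_y(w)$; substituting this into the previous equality immediately yields $\cD(w) = \cD_x(w)^2 = \cD_y(w)^2$. Finally, Lemma \ref{lem:Dwprofile0} provides the closed form of $\cD_x(w)$ (equivalently $\cD_y(w)$) in terms of the profile $(l_0, h_1, l_1, \ldots, h_r, l_r)$ as the ratio $[h_1]_q^! \cdots [h_r]_q^! / [l_0]_q^! \cdots [l_r]_q^!$, and squaring this ratio produces the expression in the statement.

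Since every ingredient has already been proved, there is no real obstacle: this is a bookkeeping lemma whose purpose is to record in one place the explicit closed form needed later, presumably when establishing $\cD(w) = D(w)$ in Theorem \ref{thm:DD}. The only minor item to verify is that the indexing convention for profiles used in Lemma \ref{lem:Dwprofile0} is the same one being used here, but this was already aligned when that lemma was stated, so the combination is immediate.
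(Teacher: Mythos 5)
Your proposal matches the paper's proof exactly: the paper also derives this lemma by combining Lemma \ref{rem:Dw}, Lemma \ref{lem:Dwprofile0}, and Lemma \ref{rem:Dwprofile0}. Nothing further is needed.
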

\begin{proof}
Follows from Lemmas \ref{rem:Dw}, \ref{lem:Dwprofile0}, \ref{rem:Dwprofile0}. 
\end{proof}

\noindent Motivated by Lemma \ref{lem:Dwprofile}, we make the following definition. 

\begin{definition}\label{def:cDwprofile}\rm
Given a Catalan profile $(l_0,h_1,l_1,\ldots,h_r,l_r)$, define 
\[\cD(l_0,h_1,l_1,\ldots,h_r,l_r)=\left(\dfrac{[h_1]_q^!\cdots[h_r]_q^!}{[l_0]_q^!\cdots[l_r]_q^!}\right)^2.\]
\end{definition}

\begin{definition}\label{def:cDn}\rm
For $n \in \mN$, we define
\[\cD_n=(-1)^n\sum_{w \in \Cat_n}\cD(w)w.\]
We interpret $\cD_0=\m1$. 
\end{definition}

\noindent Next we will achieve a recurrence relation involving the $\cD_n$. This will be accomplished in Proposition \ref{thm:Dnrecursion}. 

\begin{lemma}\label{lem:Dprofile}\rm
For a Catalan profile $(l_0,h_1,l_1,\ldots,h_r,l_r)$ with $r \geq 1$, 
\begin{align*}
&\cD(l_0,h_1,l_1,\ldots,h_r,l_r)\\
&=\sum_{j=\xi}^{r-1}\cD(l_0,h_1,l_1,\ldots,h_j,l_j,h_{j+1}-1,l_{j+1}-1,\ldots,l_{r-1}-1,h_r-1,l_r)\left([h_{j+1}]_q^2-[l_j]_q^2\right),
\end{align*}
where $\xi=\max\{j \mid 0 \leq j \leq r-1,l_j=0\}$. 
\end{lemma}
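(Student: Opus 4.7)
My plan is to reduce the identity to a purely algebraic sum identity by substituting the closed form of Definition \ref{def:cDwprofile} and recognizing the result as a telescoping sum. Write $a_k=[l_k]_q$ and $b_k=[h_k]_q$, and set $A_j=\prod_{k=j+1}^{r-1}a_k$ and $B_j=\prod_{k=j+1}^{r}b_k$, with the convention that empty products equal $1$. Using $[n-1]_q^!/[n]_q^!=1/[n]_q$, a direct computation shows that for each $j$ in the range $\xi\le j\le r-1$, the ratio of the shifted $\cD$ appearing on the right-hand side to $\cD(l_0,h_1,l_1,\ldots,h_r,l_r)$ is exactly $A_j^2/B_j^2$. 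After dividing the claimed identity by $\cD(l_0,h_1,l_1,\ldots,h_r,l_r)$, the problem reduces to proving
\[
1=\sum_{j=\xi}^{r-1}\frac{A_j^2}{B_j^2}\bigl(b_{j+1}^2-a_j^2\bigr).
\]

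Next I would introduce $V_j=a_j^2 A_j^2/B_j^2$. Using $B_j=b_{j+1}B_{j+1}$ together with $A_j=a_{j+1}A_{j+1}$ (the latter valid whenever $j\le r-2$), one obtains $A_j^2 b_{j+1}^2/B_j^2=V_{j+1}$, so each summand with $\xi\le j\le r-2$ equals $V_{j+1}-V_j$. The top summand $j=r-1$ is handled separately: since $A_{r-1}=1$ and $B_{r-1}=b_r$, it equals $1-V_{r-1}$. Summing telescopically yields $1-V_\xi$, and $V_\xi=0$ because $a_\xi=[l_\xi]_q=[0]_q=0$ by the defining property of $\xi$. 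This gives the desired value $1$.

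The main obstacle, although a modest one, is correctly handling the two boundary indices of the sum. The upper endpoint $j=r-1$ falls outside the naive telescoping pattern, since the relation $A_{r-1}=a_rA_r$ would force $A_{r-1}=0$ rather than $1$, so that term must be treated separately. The lower endpoint $\xi$ is chosen precisely so that the residual boundary term $V_\xi$ vanishes and the $q$-factorials $[l_k-1]_q^!$ appearing in the shifted profile remain well-defined, since $l_k\ge 1$ for all $k>\xi$ by maximality of $\xi$. Once these two endpoint details are tracked carefully, the identity collapses to a clean algebraic telescoping and no further combinatorial input is needed.
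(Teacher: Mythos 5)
Your proposal is correct and follows essentially the same route as the paper: divide the identity by $\cD(l_0,h_1,\ldots,h_r,l_r)$, compute the ratio of each shifted profile's $\cD$-value as $[l_{j+1}]_q^2\cdots[l_{r-1}]_q^2/([h_{j+1}]_q^2\cdots[h_r]_q^2)$, and telescope, with the boundary term killed by $[l_\xi]_q=0$. Your $V_j$ is, up to a common normalizing factor, exactly the partial product the paper telescopes with, so the two arguments differ only in bookkeeping.
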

\begin{proof}
To prove the above equation, consider the quotient of the right-hand side divided by the left-hand side. We will show that this quotient is equal to $1$. 

\medskip
\noindent By Definition \ref{def:cDwprofile}, the above quotient is equal to 
\begin{align*}
&\sum_{j=\xi}^{r-1}\frac{[l_{j+1}]_q^2\cdots[l_{r-1}]_q^2}{[h_{j+1}]_q^2\cdots[h_r]_q^2}\left([h_{j+1}]_q^2-[l_j]_q^2\right) \\
&=\frac{1}{[h_{\xi+1}]_q^2\cdots[h_r]_q^2}\sum_{j=\xi}^{r-1}[h_{\xi+1}]_q^2\cdots[h_j]_q^2[l_{j+1}]_q^2\cdots[l_{r-1}]_q^2\left([h_{j+1}]_q^2-[l_j]_q^2\right) \\
&=\frac{1}{[h_{\xi+1}]_q^2\cdots[h_r]_q^2}\sum_{j=\xi}^{r-1}\left([h_{\xi+1}]_q^2\cdots[h_{j+1}]_q^2[l_{j+1}]_q^2\cdots[l_{r-1}]_q^2-[h_{\xi+1}]_q^2\cdots[h_j]_q^2[l_j]_q^2\cdots[l_{r-1}]_q^2\right) \\
&=\frac{1}{[h_{\xi+1}]_q^2\cdots[h_r]_q^2}\left([h_{\xi+1}]_q^2\cdots[h_r]_q^2-[l_\xi]_q^2\cdots[l_{r-1}]_q^2\right) \\
&=1,
\end{align*}
where the last step follows from $l_\xi=0$. 
\end{proof}

\begin{lemma}\label{lem:innerprod0}\rm
For any Catalan word $w=a_1 \cdots a_m$, we have 
\[
\frac{q x \star w-q^{-1} w \star x}{q-q^{-1}}=\sum_{i=0}^ma_1 \cdots a_ixa_{i+1} \cdots a_m[1+2\overline{a}_1+\cdots+2\overline{a}_i]_q.
\] 
\end{lemma}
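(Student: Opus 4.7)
The plan is to compute the two $q$-shuffle products $x \star w$ and $w \star x$ directly from the defining formulas for shuffling a single letter with a word, and then exploit the Catalan condition $\overline{a}_1+\cdots+\overline{a}_m=0$ to combine them into a $q$-integer.

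First I would observe that for any letter $a_j$ (either $x$ or $y$) we have $\langle x,a_j\rangle=2\overline{a}_j$, since $\langle x,x\rangle=2=2\overline{x}$ and $\langle x,y\rangle=-2=2\overline{y}$. Writing $s_i=\overline{a}_1+\cdots+\overline{a}_i$ for brevity, the formulas for shuffling $x$ into $w$ from the left and from the right give
\[
x\star w=\sum_{i=0}^{m} a_1\cdots a_i\,x\,a_{i+1}\cdots a_m\,q^{2s_i},
\qquad
w\star x=\sum_{i=0}^{m} a_1\cdots a_i\,x\,a_{i+1}\cdots a_m\,q^{2(\overline{a}_{i+1}+\cdots+\overline{a}_m)}.
\]
Because $w$ is Catalan we have $s_m=0$, so $\overline{a}_{i+1}+\cdots+\overline{a}_m=s_m-s_i=-s_i$. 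Hence the exponent in the second expansion is $-2s_i$.

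Next I would form the combination $qx\star w-q^{-1}w\star x$. Collecting the coefficient of $a_1\cdots a_i\,x\,a_{i+1}\cdots a_m$ yields $q^{1+2s_i}-q^{-(1+2s_i)}$. Dividing by $q-q^{-1}$ and recognising the definition of $[n]_q$ with $n=1+2s_i=1+2\overline{a}_1+\cdots+2\overline{a}_i$ gives precisely the claimed identity.

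There is essentially no obstacle here: the entire argument is a bookkeeping computation from the axioms for $\star$, and the only nontrivial ingredient is the Catalan hypothesis, which is used exactly once to flip the sign of the exponent in $w\star x$ so that it pairs with the exponent in $x\star w$ to produce a $q$-integer. I would therefore present the proof as a short direct calculation rather than an induction.
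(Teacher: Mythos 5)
Your proposal is correct and follows essentially the same route as the paper: expand $x\star w$ and $w\star x$ via the letter-times-word shuffle formulas, use the Catalan condition $\overline{a}_1+\cdots+\overline{a}_m=0$ to rewrite the exponent in $w\star x$ as $-2(\overline{a}_1+\cdots+\overline{a}_i)$, and recognize the resulting quotient as $[1+2\overline{a}_1+\cdots+2\overline{a}_i]_q$. No differences worth noting.
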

\begin{proof}
By the definition of the $q$-shuffle product, we have 
\begin{align*}
 & \frac{q x \star w-q^{-1} w \star x}{q-q^{-1}} \\
 & =\sum_{i=0}^m a_1 \cdots a_ixa_{i+1} \cdots a_m \hspace{0.25em} \frac{q^{1+2\overline{a}_1+\cdots+2\overline{a}_i}-q^{-1+2\overline{a}_{i+1}+\cdots+2\overline{a}_m}}{q-q^{-1}} \\
 & =\sum_{i=0}^m a_1 \cdots a_ixa_{i+1} \cdots a_m \hspace{0.25em} \frac{q^{1+2\overline{a}_1+\cdots+2\overline{a}_i}-q^{-1-2\overline{a}_1-\cdots-2\overline{a}_i}}{q-q^{-1}} \\
 & =\sum_{i=0}^m a_1 \cdots a_ixa_{i+1} \cdots a_m[1+2\overline{a}_1+\cdots+2\overline{a}_i]_q.
\end{align*}
\end{proof}

\noindent For notation convenience, we bring in a bilinear form on $\mV$. 

\begin{definition}\label{def:biform}\rm
(See \cite[Page 6]{ter_catalan}.) Let $(~,~):\mV \times \mV \to \mF$ denote the bilinear form such that $(w,w)=1$ for any word $w$ in $\mV$ and $(w,v)=0$ for any distinct words $w,v$ in $\mV$. 
\end{definition}
\noindent Observe that $(~,~)$ is non-degenerate and symmetric. For any word $w$ in $\mV$ and any $u \in \mV$, the scalar $(w,u)$ is the coefficient of $w$ in $u$. 

\begin{lemma}\label{lem:innerprod}\rm
For any word $v$ and any Catalan word $w=a_1 \cdots a_m$, consider the scalar 
\begin{equation}\label{eq:innerprod}
\left(\frac{(q x \star w-q^{-1} w \star x)y}{q-q^{-1}},v\right).
\end{equation}
\begin{enumerate}
\item If $v$ is Catalan and of length $m+2$, then the scalar \eqref{eq:innerprod} is equal to 
	\[\sum_i[1+2\overline{a}_1+\cdots+2\overline{a}_i]_q,\] 
	where the sum is over all $i$ $(0 \leq i \leq m)$ such that $v=a_1 \cdots a_ixa_{i+1} \cdots a_my$. 
\item If $v$ is not Catalan or is not of length $m+2$, then the scalar\eqref{eq:innerprod} is equal to $0$. 
\end{enumerate}
\end{lemma}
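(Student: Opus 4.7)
The plan is to invoke Lemma \ref{lem:innerprod0} directly and then read off coefficients, with the only nontrivial content being that every word appearing on the right-hand side after right-multiplication by $y$ is Catalan.

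First I would observe that right-multiplication by the letter $y$ (ordinary concatenation, not $\star$) distributes over sums, so by Lemma \ref{lem:innerprod0}
\[
\frac{(qx \star w - q^{-1} w \star x)y}{q-q^{-1}} = \sum_{i=0}^m a_1 \cdots a_i x a_{i+1} \cdots a_m y \cdot [1+2\overline{a}_1+\cdots+2\overline{a}_i]_q.
\]
This is a linear combination of words of length $m+2$, so by Definition \ref{def:biform} and the orthogonality of the standard basis under $(\,,\,)$, the scalar \eqref{eq:innerprod} equals the coefficient of $v$ in this sum.

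For part (1), if $v$ has length $m+2$ then its coefficient in the sum is precisely $\sum_i[1+2\overline{a}_1+\cdots+2\overline{a}_i]_q$, where $i$ ranges over those indices in $\{0,\ldots,m\}$ with $a_1\cdots a_i x a_{i+1}\cdots a_m y = v$; this is exactly the claimed formula.

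For part (2), the scalar vanishes trivially when $v$ has length different from $m+2$, so the only remaining case is that $v$ has length $m+2$ but is not Catalan. For this case I would show that every summand $a_1\cdots a_i x a_{i+1}\cdots a_m y$ is itself Catalan, so that a non-Catalan $v$ of length $m+2$ can match none of them. This reduces to a short elevation-sequence verification: for positions $1\le j\le i$ the partial sum of the summand equals the partial sum of $w$ at position $j$; for $i+1\le j\le m+1$ it equals the partial sum of $w$ at position $j-1$ plus $1$; and at $j=m+2$ it equals the total sum of $w$ plus $1-1=0$. Since $w$ is Catalan, every intermediate partial sum is nonnegative and the total vanishes, so the summand is Catalan, as desired. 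Hence the coefficient of any non-Catalan $v$ of length $m+2$ is $0$.

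The main obstacle, if one can call it that, is just this elevation-sequence verification; the rest of the argument is bookkeeping off of Lemma \ref{lem:innerprod0}.
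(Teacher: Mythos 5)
Your proposal is correct and follows the same route as the paper, which simply states that the lemma follows from Lemma \ref{lem:innerprod0}; you have filled in the details the paper leaves implicit (right-concatenation by $y$, reading off coefficients via the bilinear form, and the elevation-sequence check that each summand $a_1\cdots a_i x a_{i+1}\cdots a_m y$ is Catalan). The elevation-sequence verification is accurate and is exactly the point needed to justify part (2) for non-Catalan $v$ of length $m+2$.
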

\begin{proof}
Follows from Lemma \ref{lem:innerprod0}. 
\end{proof}

\begin{lemma}\label{lem:Dvdecomp}\rm
For $n \geq 1$ and a word $v \in \Cat_n$, we have 
\[\cD(v)=\sum_{w \in \Cat_{n-1}}\cD(w)\left(\frac{(q x \star w-q^{-1} w \star x)y}{q-q^{-1}},v\right).\]
\end{lemma}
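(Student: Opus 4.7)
The plan is to fix the Catalan word $v=b_1\cdots b_{2n}$, note that its last letter $b_{2n}$ must equal $y$, and reparameterize the right-hand side as a sum over certain positions in $v$ rather than over $\Cat_{n-1}$. By Lemma \ref{lem:innerprod}, each index $i$ satisfying $v=a_1\cdots a_i x a_{i+1}\cdots a_{2n-2} y$ with $w=a_1\cdots a_{2n-2}$ contributes $\cD(w)[1+2(\overline{a}_1+\cdots+\overline{a}_i)]_q$. Setting $j=i+1$, this is the same as summing $\cD(w^{(j)})[2e_j-1]_q$ over positions $j$ of $v$ with $b_j=x$ and $w^{(j)}\in \Cat_{n-1}$, where $e_j$ denotes the elevation of $v$ at position $j$ and $w^{(j)}$ is the word obtained from $v$ by deleting positions $j$ and $2n$.

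The first task is to identify admissibility, i.e., which $j$ give $w^{(j)}\in \Cat_{n-1}$. A direct check with elevation sequences shows this is equivalent to $b_j=x$ together with $e_k\geq 1$ for all $j\leq k\leq 2n-1$. Writing the profile of $v$ as $(l_0,h_1,l_1,\ldots,h_r,l_r)$ and $\xi=\max\{s:0\leq s\leq r-1,\ l_s=0\}$ as in Lemma \ref{lem:Dprofile}, this means the admissible positions are precisely the $x$-letters occurring in an ascent from $l_s$ to $h_{s+1}$ with $s\geq \xi$. Moreover, tracking elevations shows that whenever $j$ belongs to the $s$-th admissible ascent, $w^{(j)}$ has profile
\[
(l_0,h_1,l_1,\ldots,h_s,l_s,h_{s+1}-1,l_{s+1}-1,\ldots,l_{r-1}-1,h_r-1,l_r),
\]
depending only on $s$ and not on the particular $j$.

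To finish, I would group the sum by $s$. Within the $s$-th admissible ascent there are $h_{s+1}-l_s$ positions $j$, at which $e_j$ ranges over $l_s+1,l_s+2,\ldots,h_{s+1}$. A one-line computation from the definition of $[n]_q$ yields $[m]_q^2-[m-1]_q^2=[2m-1]_q$, so the inner sum over $j$ telescopes to $[h_{s+1}]_q^2-[l_s]_q^2$. The right-hand side of the lemma thus rewrites as
\[
\sum_{s=\xi}^{r-1}\cD(l_0,h_1,\ldots,h_s,l_s,h_{s+1}-1,\ldots,h_r-1,l_r)\bigl([h_{s+1}]_q^2-[l_s]_q^2\bigr),
\]
which is exactly $\cD(l_0,h_1,\ldots,h_r,l_r)=\cD(v)$ by Lemma \ref{lem:Dprofile}.

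The main obstacle I anticipate is the bookkeeping of admissible positions and the profile of $w^{(j)}$: one must verify carefully, from either the elevation-sequence or the profile viewpoint, that deleting an $x$ at position $j$ together with the trailing $y$ leaves a Catalan word precisely when $j$ lies at or beyond the last zero valley $l_\xi$, and that the resulting profile is the shifted-down tail described above. Once this is settled, the telescoping identity together with Lemma \ref{lem:Dprofile} handles the rest essentially mechanically.
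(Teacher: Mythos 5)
Your proposal is correct and follows essentially the same route as the paper: reduce via Lemma \ref{lem:innerprod} to a weighted count of insertion positions, observe that the admissible positions are exactly the $x$'s on ascents at or beyond the last zero valley $l_\xi$ and that the resulting profile is the shifted-down tail, evaluate each group by the telescoping identity $\sum_{s=l_j}^{h_{j+1}-1}[1+2s]_q=[h_{j+1}]_q^2-[l_j]_q^2$, and conclude with Lemma \ref{lem:Dprofile}. The only difference is organizational (you sum over positions $j$ in $v$ and group by ascent, whereas the paper sums over $w\in\Cat_{n-1}$ and computes the coefficient of each $\cD(w)$), and your write-up makes explicit the admissibility bookkeeping that the paper leaves as a parenthetical remark.
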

\begin{proof}
By Lemma \ref{lem:innerprod}, it suffices to show that $\cD(v)$ is equal to 
\begin{equation}\label{eq:cD(v)}
\sum_{w,i}\cD(w)[1+2\overline{a}_1+\cdots+2\overline{a}_i]_q,
\end{equation}
where the sum is over all ordered pairs $(w,i)$ such that $w=a_1 \cdots a_{2n-2} \in \Cat_{n-1}$ and $v=a_1 \cdots a_ixa_{i+1} \cdots a_{2n-2}y$. 

\medskip
\noindent Let $(l_0,h_1,l_1,\ldots,h_r,l_r)$ denote the profile of $v$ and let $\xi=\max\{j \mid 0 \leq j \leq r-1,l_j=0\}$. 

\medskip
\noindent To compute the sum \eqref{eq:cD(v)}, we study what kind of words $w$ are being summed over and what is the coefficient for each corresponding $\cD(w)$. 

\medskip
\noindent For any $w$ being summed over in \eqref{eq:cD(v)}, its profile must be of the form 
\[(l_0,h_1,l_1,\ldots,h_j,l_j,h_{j+1}-1,l_{j+1}-1,\ldots,l_{r-1}-1,h_r-1,l_r)\]
for some $j$ such that $\xi \leq j \leq r-1$. (If $j<\xi$, then the profile of $w$ contains $l_\xi-1=-1$, which means $w$ is not Catalan.) 

\medskip
\noindent For such $w$, the coefficient of $\cD(w)$ in \eqref{eq:cD(v)} is 
\[\sum_{s=l_j}^{h_{j+1}-1}[1+2s]_q,\]
which is equal to 
\[[h_{j+1}]_q^2-[l_j]_q^2\]
by direct computation. 

\medskip
\noindent Therefore, by Lemma \ref{lem:Dprofile} we have 
\begin{align*}
&\sum_{w,i}\cD(w)[1+2\overline{a}_1+\cdots+2\overline{a}_i]_q \\
&=\sum_{j=\xi}^{r-1}\cD(l_0,h_1,l_1,\ldots,h_j,l_j,h_{j+1}-1,l_{j+1}-1,\ldots,l_{r-1}-1,h_r-1,l_r)\left([h_{j+1}]_q^2-[l_j]_q^2\right) \\
&=\cD(l_0,h_1,l_1,\ldots,h_r,l_r) \\
&=\cD(v). 
\end{align*}
\end{proof}

\begin{proposition}\label{thm:Dnrecursion}\rm
For $n \geq 1$, 
\begin{equation}\label{eq:Dnrecursion}
\cD_n=\frac{(q^{-1} \cD_{n-1} \star x-q x \star \cD_{n-1})y}{q-q^{-1}}.
\end{equation}
\end{proposition}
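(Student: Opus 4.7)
The plan is to prove the identity coefficient-by-coefficient using the non-degenerate bilinear form $(\,,\,)$ from Definition \ref{def:biform}, with Lemma \ref{lem:Dvdecomp} doing the main computational work. Since both $\cD_n$ and the right-hand side are elements of $\mV$, it suffices to verify that both have the same pairing with every standard-basis word $v$.

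First, I would use Definition \ref{def:cDn} to substitute $\cD_{n-1}=(-1)^{n-1}\sum_{w\in\Cat_{n-1}}\cD(w)\,w$ into the right-hand side and pull out the sign and $\cD(w)$-weights by linearity of $\star$ and concatenation. This rewrites the right-hand side as
\[
(-1)^n\sum_{w\in\Cat_{n-1}}\cD(w)\,\frac{(qx\star w-q^{-1}w\star x)\,y}{q-q^{-1}},
\]
where the minus sign in the proposition combines with $(-1)^{n-1}$ to produce $(-1)^n$.

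Next, I would pair this with an arbitrary word $v$ and distinguish two cases. If $v$ has length $2n$ and is Catalan, then by the symmetry of $(\,,\,)$ and Lemma \ref{lem:Dvdecomp} the pairing equals $(-1)^n\cD(v)$, which is exactly the coefficient of $v$ in $\cD_n$ by Definition \ref{def:cDn}. If $v$ is not Catalan or is not of length $2n$, then each term of the sum pairs to $0$ by Lemma \ref{lem:innerprod}(ii) (one should briefly note that every word appearing in $\frac{(qx\star w-q^{-1}w\star x)y}{q-q^{-1}}$ has length $2n$, so there is nothing to check outside that length); on the other hand, by Proposition \ref{prop:Dndecomp} (or directly from Definition \ref{def:cDn}) the coefficient of such $v$ in $\cD_n$ is also $0$.

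The non-degeneracy of $(\,,\,)$ then forces the two sides to coincide. There is no real obstacle: the combinatorial heart of the recursion has already been isolated into Lemma \ref{lem:Dvdecomp} and, ultimately, into the profile identity of Lemma \ref{lem:Dprofile}. The only point requiring care is sign bookkeeping, namely tracking how the $(-1)^{n-1}$ inside $\cD_{n-1}$ interacts with the overall minus sign $(q^{-1}\cD_{n-1}\star x-qx\star\cD_{n-1})$ to yield the $(-1)^n$ prescribed by Definition \ref{def:cDn}.
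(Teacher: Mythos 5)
Your proposal is correct and follows essentially the same route as the paper: both expand $\cD_{n-1}$ via Definition \ref{def:cDn} (producing the $(-1)^n$ sign), pair the right-hand side with an arbitrary word $v$, and invoke Lemma \ref{lem:Dvdecomp} when $v\in\Cat_n$ and Lemma \ref{lem:innerprod} when $v$ is not Catalan or not of length $2n$. The only cosmetic difference is that the paper cites Definition \ref{def:cDn} rather than Proposition \ref{prop:Dndecomp} for the vanishing of $(\cD_n,v)$ in the degenerate case, which is the justification you also note parenthetically.
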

\begin{proof}
Given any word $v$, we will show that its inner product with the right-hand side of \eqref{eq:Dnrecursion} coincides with $(\cD_n,v)$. 

\medskip
\noindent If $v$ does not have length $2n$, then the two inner products are both $0$. 

\medskip
\noindent If $v$ is not Catalan, then $(\cD_n,v)=0$ by Definition \ref{def:cDn}, and 
\[\left(\frac{(q^{-1} \cD_{n-1} \star x-q x \star \cD_{n-1})y}{q-q^{-1}},v\right)=0\]
by Definition \ref{def:cDn} and Lemma \ref{lem:innerprod}. 

\medskip
\noindent If $v \in \Cat_n$, then by Definition \ref{def:cDn} and Lemma \ref{lem:Dvdecomp},  
\begin{align*}
&\left(\frac{(q^{-1} \cD_{n-1} \star x-q x \star \cD_{n-1})y}{q-q^{-1}},v\right) \\
&=(-1)^n\sum_{w \in \Cat_{n-1}}\cD(w)\left(\frac{(q x \star w-q^{-1} w \star x)y}{q-q^{-1}},v\right) \\
&=(-1)^n\cD(v) \\
&=(\cD_n,v). 
\end{align*}
\end{proof}

\begin{definition}\label{def:cDt}\rm
(See \cite[Definition 9.11]{ter_alternating}.) We define a generating function
\[
\cD(t)=\sum_{n \in \mN} \cD_nt^n, 
\]
where $\cD_n$ is from Definition \ref{def:cDn}. 
\end{definition}

\noindent Next we will show that $\cD(t)=D(t)$. To do this, we will show that $\cD(t)$ is the multiplicative inverse of $\tilde{G}(t)$ with respect to $\star$. This will be accomplished in Proposition \ref{prop:inverse}. 

\begin{lemma}\label{lem:inverse0}\rm
For $k \in \mN$, we have 
\[
q\tilde{G}_k \star x=(q-q^{-1})W_{-k}+q^{-1}x \star \tilde{G}_k. 
\]
\end{lemma}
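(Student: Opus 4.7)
The plan is a direct calculation with the $q$-shuffle product. Write $\tilde{G}_k = a_1 a_2 \cdots a_{2k}$, where $a_{2j-1}=x$ and $a_{2j}=y$ for $1 \leq j \leq k$, and for each $0 \leq i \leq 2k$ let $w_i = a_1 \cdots a_i x a_{i+1} \cdots a_{2k}$. Applying the one-sided expansion formulas for a letter shuffled with a nontrivial word, both $\tilde{G}_k \star x$ and $x \star \tilde{G}_k$ become sums of the $w_i$ weighted by powers of $q$. In particular $q\tilde{G}_k \star x - q^{-1} x \star \tilde{G}_k$ is a single sum over $i$ on the same standard basis words $w_i$, so I only need to identify its coefficients.

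Let $s_i = \overline{a}_1 + \cdots + \overline{a}_i$, with the convention $s_0 = 0$. Since $\langle x,x\rangle=2$ and $\langle x,y\rangle=-2$, one has $\sum_{j=1}^{i}\langle x, a_j\rangle = 2s_i$ and, using $s_{2k}=0$, also $\sum_{j=i+1}^{2k}\langle x, a_j\rangle = -2s_i$. Consequently the coefficient of $w_i$ in $q\tilde{G}_k \star x$ equals $q^{1-2s_i}$ while the coefficient of $w_i$ in $q^{-1}x \star \tilde{G}_k$ equals $q^{-1+2s_i}$. Because the elevation sequence of $\tilde{G}_k$ alternates between $0$ and $1$, $s_i=0$ for even $i$ and $s_i=1$ for odd $i$. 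Hence the coefficient of $w_i$ in $q\tilde{G}_k \star x - q^{-1} x \star \tilde{G}_k$ is $q - q^{-1}$ when $i$ is even and $-(q-q^{-1})$ when $i$ is odd.

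The remaining step is to observe that the words $w_i$ are not all distinct. For each $0 \leq j \leq k-1$, the insertion at $i=2j$ gives $w_{2j}=(xy)^j\, x\, (xy)^{k-j}$, while the insertion at $i=2j+1$ gives $w_{2j+1}=(xy)^j x\, x y\, (xy)^{k-j-1}$; these are the same standard basis word, and they appear with opposite coefficients, so their contributions cancel in pairs. The only index not paired off is $i=2k$, whose contribution is $(q-q^{-1})\, \tilde{G}_k x = (q-q^{-1})\, W_{-k}$ by Definition \ref{def:alternating}. Rearranging yields the claim.

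The main obstacle is not conceptual but purely bookkeeping: verifying that the insertions at $i=2j$ and $i=2j+1$ produce the same word, and using $s_{2k}=0$ correctly to convert the right-shuffle exponent into $-2s_i$ without sign or off-by-one errors. Everything else is a one-line application of the defining formulas of $\star$.
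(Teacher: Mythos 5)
Your proof is correct and is exactly the direct computation from the definition of $\star$ that the paper leaves to the reader: the coefficient bookkeeping (exponents $\pm 2s_i$, the alternation $s_i\in\{0,1\}$, the coincidence $w_{2j}=w_{2j+1}$, and the surviving term $w_{2k}=\tilde{G}_kx=W_{-k}$) all check out. No gaps.
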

\begin{proof}
Follows from the definition of $\star$ by direct computation. 
\end{proof}

\begin{lemma}\label{lem:inverse}\rm
For $n \geq 1$, 
\begin{equation}\label{convolution2}
\cD_n=-\sum_{k=1}^n\tilde{G}_k \star \cD_{n-k}.
\end{equation}
\end{lemma}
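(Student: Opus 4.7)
The plan is to prove the lemma by strong induction on $n \ge 1$. For the base case $n = 1$, Proposition \ref{thm:Dnrecursion} with $\cD_0 = \m1$ yields
\[
\cD_1 = \frac{(q^{-1} x - q x) y}{q - q^{-1}} = -xy = -\tilde G_1 \star \cD_0,
\]
confirming the identity.

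For the inductive step, assume the identity for all indices $1 \le m < n$. I would first apply Proposition \ref{thm:Dnrecursion} to write $(q-q^{-1}) \cD_n = (q^{-1} \cD_{n-1} \star x - q x \star \cD_{n-1}) y$, then substitute the induction hypothesis $\cD_{n-1} = -\sum_{k=1}^{n-1} \tilde G_k \star \cD_{n-1-k}$. The right-hand side becomes
\[
-\sum_{k=1}^{n-1} \bigl(q^{-1} \tilde G_k \star \cD_{n-1-k} \star x - q x \star \tilde G_k \star \cD_{n-1-k}\bigr) y.
\]
The key step is then to invoke Lemma \ref{lem:inverse0}, which gives $q \tilde G_k \star x - q^{-1} x \star \tilde G_k = (q - q^{-1}) W_{-k}$, together with the concatenation identity $\tilde G_{k+1} = W_{-k} y$, to rewrite each summand so that, after collecting terms and reindexing $k \mapsto k+1$, the expression matches $-(q - q^{-1}) \sum_{k=1}^n \tilde G_k \star \cD_{n-k}$. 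In this reindexing I expect the boundary contributions to produce the $k = 1$ term $-\tilde G_1 \star \cD_{n-1}$ and the $k = n$ term $-\tilde G_n \star \cD_0$ that would otherwise be missing from the reindexed sum.

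The main obstacle is that the $\star$-product does not distribute over right-concatenation by $y$, so one cannot naively commute $\cD_{n-1-k}$ past the trailing $y$, nor freely swap factors such as $\tilde G_k \star v \star x$ with $\tilde G_k \star x \star v$. To handle this, I would expand the $\star$-products using the recursive rules for $\star$ on words ending in $y$, exploiting that all $\cD_m$ and $\tilde G_k$ lie in the span of Catalan words (whose last letter is $y$), so that the trailing $y$ can be factored cleanly. Once isolated, Lemma \ref{lem:inverse0} then provides the bridge between the $(q^{-1}, q)$ coefficient pattern appearing in the recurrence of Proposition \ref{thm:Dnrecursion} for $\cD_n$ and the $(q, q^{-1})$ coefficient pattern appearing in the analogous recurrence $(q-q^{-1}) \tilde G_{k+1} = (q \tilde G_k \star x - q^{-1} x \star \tilde G_k) y$ obtained by combining Lemma \ref{lem:inverse0} with $\tilde G_{k+1} = W_{-k} y$.
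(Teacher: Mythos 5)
Your proposal is correct and follows essentially the same route as the paper's proof: induction on $n$, the recursion of Proposition \ref{thm:Dnrecursion}, Lemma \ref{lem:inverse0} to move $x$ past $\tilde{G}_k$ at the cost of a $W_{-k}$ correction, the trailing-$y$ expansion of the $q$-shuffle product for Catalan words, and a telescoping sum exploiting $\tilde{G}_ky^{-1}=W_{-k+1}$. The only difference is cosmetic: the paper starts from $\sum_{k=1}^{n-1}\tilde{G}_k\star\cD_{n-k}$ and reduces it to $-\cD_n-\tilde{G}_n$, whereas you start from the recursion for $\cD_n$ and substitute the induction hypothesis up front.
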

\begin{proof}
We use induction on $n$. 

\medskip
\noindent First assume that $n=1$. Then \eqref{convolution2} holds because 
\[\cD_0=\m1, \hspace{4em} \cD_1=-xy, \hspace{4em} \tilde{G}_1=xy.\]
Next assume that $n \geq 2$. By induction, 
\begin{equation}\label{i1}
\cD_{n-1}=-\sum_{k=1}^{n-1}\tilde{G}_k \star \cD_{n-1-k}.
\end{equation}
In order to prove \eqref{convolution2}, it suffices to show 
\begin{equation}\label{i1.5}
\sum_{k=1}^{n-1}\tilde{G}_k \star \cD_{n-k}=-\cD_n-\tilde{G}_n. 
\end{equation}
For $1 \leq k \leq n-1$ we examine the $k$-summand in \eqref{i1.5}. We use the following notation: for a word $w$ ending with the letter $y$, the word $wy^{-1}$ is obtained from $w$ by removing the rightmost $y$. Furthermore, for a linear combination $A$ of words ending in $y$, the element $Ay^{-1}$ is obtained from $A$ by removing the rightmost $y$ of each word in the linear combination. 

\medskip
\noindent Note that $\tilde{G}_k$ is a word ending in $y$, and $\cD_{n-k}$ is a linear combination of Catalan words which end in $y$ by Definition \ref{def:Cat}, so 
\begin{equation}\label{i2}
\tilde{G}_k \star \cD_{n-k}=(\tilde{G}_ky^{-1} \star \cD_{n-k})y+(\tilde{G}_k \star \cD_{n-k}y^{-1})y. 
\end{equation}
We focus on the second term of the right-hand side of \eqref{i2}. By Proposition \ref{thm:Dnrecursion} and Lemma \ref{lem:inverse0}, we have
\begin{equation*}\label{i3}
\begin{split}
&\tilde{G}_k \star \cD_{n-k}y^{-1} \\
&=-\frac{1}{q-q^{-1}}\tilde{G}_k \star (qx \star \cD_{n-k-1}-q^{-1}\cD_{n-k-1} \star x) \\
&=-\frac{q}{q-q^{-1}}\tilde{G}_k \star x \star \cD_{n-k-1}+\frac{q^{-1}}{q-q^{-1}}\tilde{G}_k \star \cD_{n-k-1} \star x \\
&=-W_{-k} \star \cD_{n-k-1}-\frac{q^{-1}}{q-q^{-1}}x \star \tilde{G}_k \star \cD_{n-k-1}+\frac{q^{-1}}{q-q^{-1}}\tilde{G}_k \star \cD_{n-k-1} \star x.
\end{split}
\end{equation*}
By the above comment, and since $\tilde{G}_ky^{-1}=W_{-k+1}$, we can write \eqref{i2} as 
\begin{equation*}\label{i4}
\begin{split}
&\tilde{G}_k \star \cD_{n-k} \\
&=(W_{-k+1} \star \cD_{n-k})y-(W_{-k} \star \cD_{n-k-1})y \\
&\hspace{4em} -\frac{q^{-1}}{q-q^{-1}}(x \star \tilde{G}_k \star \cD_{n-k-1})y+\frac{q^{-1}}{q-q^{-1}}(\tilde{G}_k \star \cD_{n-k-1} \star x)y.
\end{split}
\end{equation*}
We now sum the above equation over $k$ from $1$ to $n-1$, using \eqref{i1} and Proposition \ref{thm:Dnrecursion}. We have
\begin{equation*}
\begin{split}
&\sum_{k=1}^{n-1}\tilde{G}_k \star \cD_{n-k} \\
&=(W_0 \star \cD_{n-1})y-(W_{-n+1} \star \cD_0)y+\frac{q^{-1}}{q-q^{-1}}(x \star \cD_{n-1})y-\frac{q^{-1}}{q-q^{-1}}(\cD_{n-1} \star x)y \\
&=(x \star \cD_{n-1})y-\tilde{G}_n+\frac{q^{-1}}{q-q^{-1}}(x \star \cD_{n-1})y-\frac{q^{-1}}{q-q^{-1}}(\cD_{n-1} \star x)y \\ 
&=\frac{q}{q-q^{-1}}(x \star \cD_{n-1})y-\frac{q^{-1}}{q-q^{-1}}(\cD_{n-1} \star x)y-\tilde{G}_n \\
&=-\cD_n-\tilde{G}_n.
\end{split}
\end{equation*}
We have verified \eqref{i1.5}, and \eqref{convolution2} follows. 
\end{proof}

\begin{definition}\label{def:zeta}\rm
(See \cite[Page 5]{ter_catalan}.) Let $\zeta:\mV \to \mV$ denote the $\mF$-linear map such that 
\begin{itemize}
\item $\zeta(x)=y$, 
\item $\zeta(y)=x$, 
\item For any word $a_1 \cdots a_m$, 
	\[\zeta(a_1 \cdots a_m)=\zeta(a_m) \cdots \zeta(a_1).\]
\end{itemize}
\end{definition}

\noindent By the above definition, $\zeta$ is an antiautomorphism on the free algebra $\mV$. One can routinely check using the definition of $\star$ that $\zeta$ is also an antiautomorphism on the $q$-shuffle algebra $\mV$. Moreover, $\zeta$ fixes $\tilde{G}_n$ and $\cD_n$ for all $n \in \mN$. 

\begin{proposition}\label{prop:inverse}\rm
We have 
\[
\tilde{G}(t) \star \cD(t)=\m1=\cD(t) \star \tilde{G}(t). 
\]
\end{proposition}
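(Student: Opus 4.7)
The plan is to unpack Lemma \ref{lem:inverse} as a statement about generating functions, which immediately yields $\tilde{G}(t) \star \cD(t)=\m1$, and then to use the antiautomorphism $\zeta$ to swap the order and obtain $\cD(t) \star \tilde{G}(t)=\m1$.

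First I would rewrite the recurrence in Lemma \ref{lem:inverse} as
\[
\sum_{k=0}^{n}\tilde{G}_k\star\cD_{n-k}=0\qquad(n\geq 1),
\]
using $\tilde{G}_0=\m1$, and note that for $n=0$ the same convolution gives $\tilde{G}_0\star\cD_0=\m1$. Collecting these identities into the generating function $\tilde{G}(t)\star\cD(t)=\sum_{n\in\mN}\bigl(\sum_{k=0}^n\tilde{G}_k\star\cD_{n-k}\bigr)t^n$ yields $\tilde{G}(t)\star\cD(t)=\m1$. This is the first of the two desired equalities.

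For the second, I would apply the antiautomorphism $\zeta$ of the $q$-shuffle algebra (Definition \ref{def:zeta} and the remark following it). Since $\zeta$ is an antiautomorphism, we have $\zeta(\tilde{G}_k\star\cD_{n-k})=\zeta(\cD_{n-k})\star\zeta(\tilde{G}_k)$; since $\zeta$ fixes each $\tilde{G}_n$ and each $\cD_n$, applying $\zeta$ coefficientwise to $\tilde{G}(t)\star\cD(t)=\m1$ gives
\[
\cD(t)\star\tilde{G}(t)=\m1.
\]

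There is no real obstacle here: Lemma \ref{lem:inverse} has already done the combinatorial work, and the $\zeta$ trick converts the one-sided inverse into a two-sided inverse with essentially no computation. The only thing worth being careful about is the verification (remarked on but not proved in the text) that $\zeta$ fixes each $\cD_n$; if one wanted a self-contained argument one could instead prove the right-sided analogue of Lemma \ref{lem:inverse}, namely $\cD_n=-\sum_{k=1}^n\cD_{n-k}\star\tilde{G}_k$, by the same induction using the mirror identity $qx\star\tilde{G}_k=(q-q^{-1})W_{-k}+q^{-1}\tilde{G}_k\star x$ (obtained from Lemma \ref{lem:inverse0} by applying $\zeta$, or directly from the definition of $\star$) in place of Lemma \ref{lem:inverse0}.
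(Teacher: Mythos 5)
Your proposal is correct and follows essentially the same route as the paper: combine Lemma \ref{lem:inverse} with $\tilde{G}_0=\cD_0=\m1$ to get the convolution identity $\sum_{k=0}^n\tilde{G}_k\star\cD_{n-k}=0$ for $n\geq 1$, hence $\tilde{G}(t)\star\cD(t)=\m1$, and then apply the antiautomorphism $\zeta$ to obtain the reversed product. Your closing observation that the $\zeta$-invariance of $\cD_n$ is asserted but not proved in the text (it follows from Lemma \ref{lem:Dwprofile}, since reversing a Catalan word reverses its profile without changing the multiset of peaks and valleys) is a fair point, and your suggested self-contained alternative via the mirror of Lemma \ref{lem:inverse} would also work.
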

\begin{proof}
We have $\tilde{G}_0=\m1$ and $\cD_0=\m1$. By Lemma \ref{lem:inverse}, for any $n \geq 1$ we have 
\begin{equation*}
\sum_{k=0}^n\tilde{G}_k \star \cD_{n-k}=0.
\end{equation*}
By these comments,  
\begin{equation}\label{i6}
\tilde{G}(t) \star \cD(t)=\m1.
\end{equation}
Applying $\zeta$ to \eqref{i6}, we have 
\[\cD(t) \star \tilde{G}(t)=\m1.\]
\end{proof}

\begin{theorem}\label{thm:DD}\rm
The following hold. 
\begin{enumerate}[(i)]
\item $\cD(t)=D(t)$. 
\item $\cD_n=D_n$ for any $n \in \mN$.
\item $\cD(w)=D(w)$ for any Catalan word $w$. 
\end{enumerate}
\end{theorem}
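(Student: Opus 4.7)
The plan is to deduce all three items from a uniqueness-of-inverse argument. By Lemma \ref{lem:Dtinverse}, $D(t)$ is a two-sided $\star$-inverse of $\tilde{G}(t)$; by Proposition \ref{prop:inverse}, $\cD(t)$ is also a two-sided $\star$-inverse of $\tilde{G}(t)$. First I would note (or briefly verify) that the $\star$-product on $\mV$ is associative, so that the induced $\star$-product on the formal power series ring $\mV[[t]]$ is associative too. Associativity plus existence of a two-sided identity $\m1$ then forces two-sided inverses to be unique in the usual way: if $a \star b = \m1 = b \star a$ and $a \star c = \m1 = c \star a$, then $b = b \star \m1 = b \star (a \star c) = (b \star a) \star c = \m1 \star c = c$.

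Applied to $a = \tilde{G}(t)$, $b = D(t)$, $c = \cD(t)$, this yields $\cD(t) = D(t)$, which is item (i). Explicitly, one can write
\[
\cD(t) = \cD(t) \star \m1 = \cD(t) \star \tilde{G}(t) \star D(t) = \m1 \star D(t) = D(t).
\]

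For item (ii), I would equate the coefficients of $t^n$ on both sides of $\cD(t) = D(t)$, using Definition \ref{def:Dt} and Definition \ref{def:cDt}; this gives $\cD_n = D_n$ for all $n \in \mN$. For item (iii), I would use Definition \ref{def:Dn} and Definition \ref{def:cDn} to write
\[
(-1)^n \sum_{w \in \Cat_n} \cD(w)\, w \;=\; \cD_n \;=\; D_n \;=\; (-1)^n \sum_{w \in \Cat_n} D(w)\, w,
\]
and then invoke linear independence of distinct words in the standard basis of $\mV$ to conclude $\cD(w) = D(w)$ for each $w \in \Cat_n$.

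There is no substantive obstacle here, since all the genuinely technical work is already packaged into Proposition \ref{prop:inverse} (whose proof relied on Lemma \ref{lem:inverse} and the recursion in Proposition \ref{thm:Dnrecursion}). The only point to be careful about is verifying that uniqueness of multiplicative inverse is legitimate in the power series ring $\mV[[t]]$, but this is immediate from associativity of $\star$.
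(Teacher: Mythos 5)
Your proposal is correct and follows essentially the same route as the paper: the paper's proof of item (i) is precisely "compare Lemma \ref{lem:Dtinverse} and Proposition \ref{prop:inverse}," i.e.\ the uniqueness of the two-sided $\star$-inverse of $\tilde{G}(t)$, with items (ii) and (iii) obtained by comparing coefficients exactly as you describe. You merely spell out the standard associativity argument for uniqueness of inverses, which the paper leaves implicit.
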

\begin{proof}
Comparing Lemma \ref{lem:Dtinverse} and Proposition \ref{prop:inverse}, we obtain item (i). Item (ii) follows from item (i) by Definitions \ref{def:Dt} and \ref{def:cDt}. Item (iii) follows from item (ii) by Definitions \ref{def:Dn} and \ref{def:cDn}. 
\end{proof}

\noindent This finishes our proof of Theorem \ref{thm:main}. 

\section{Some facts about $\{D_n\}_{n \in \mN}$}
In this section, we state some facts about $\{D_n\}_{n \in \mN}$ that we find attractive. 

\begin{proposition}\label{prop:Dnpoly}\rm
(See \cite[Lemma 9.7]{ter_alternating}.) For $n \geq 1$, 
\begin{itemize}
\item $D_n$ is a polynomial in $\tilde{G}_1,\ldots,\tilde{G}_n$ of degree $n$, where each $\tilde{G}_i$ is given the degree $i$, 
\item $\tilde{G}_n$ is a polynomial in $D_1,\ldots,D_n$ of degree $n$, where each $D_i$ is given the degree $i$. 
\end{itemize}
\end{proposition}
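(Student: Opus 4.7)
The plan is to use the recursion $D_n = -\sum_{k=1}^n \tilde{G}_k \star D_{n-k}$ for $n \geq 1$, which is available from Lemma \ref{lem:inverse} combined with Theorem \ref{thm:DD} (i.e., $\cD_n = D_n$). Since the $\tilde{G}_i$ mutually commute under $\star$ (quoted from \cite{ter_alternating} in the excerpt), polynomial expressions in the $\tilde{G}_i$ with respect to $\star$ make sense as formal polynomials in commuting variables, so the notion of degree is unambiguous.

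For the first bullet, I would induct on $n$. The base case $n=1$ gives $D_1 = -\tilde{G}_1$, a polynomial of degree $1$. For the inductive step, assume that for each $k < n$ we already have $D_k$ as a polynomial in $\tilde{G}_1, \ldots, \tilde{G}_k$ of degree $k$ (where $\tilde{G}_i$ has degree $i$). Then in the recursion each summand $\tilde{G}_k \star D_{n-k}$ is a polynomial in $\tilde{G}_1, \ldots, \tilde{G}_n$ of total degree $k + (n-k) = n$, so $D_n$ is such a polynomial of degree at most $n$. For the lower bound on the degree, the $k=n$ term of the recursion contributes exactly $-\tilde{G}_n$, a monomial of degree $n$ that cannot be canceled by the other summands (which involve only $\tilde{G}_1, \ldots, \tilde{G}_{n-1}$ after expansion by the inductive hypothesis), so the degree is exactly $n$.

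For the second bullet, I first observe that the first bullet places every $D_n$ inside the $\star$-subalgebra generated by $\{\tilde{G}_i\}_{i \in \mN}$, which is commutative; hence the $D_n$ also mutually commute under $\star$, and polynomials in the $D_i$ are well-defined. Rearranging the same recursion yields
\[
\tilde{G}_n = -D_n - \sum_{k=1}^{n-1} \tilde{G}_k \star D_{n-k}.
\]
A second induction on $n$, with base case $\tilde{G}_1 = -D_1$, then expresses $\tilde{G}_n$ as a polynomial in $D_1, \ldots, D_n$. Each summand has total degree $n$ when $D_i$ is assigned degree $i$, and the leading term $-D_n$ guarantees the degree is exactly $n$ by the same non-cancellation argument as before.

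I do not anticipate a genuine obstacle here: the recursion from Lemma \ref{lem:inverse} does all the real work, and the mutual commutativity needed to make sense of the polynomial statements is either quoted (for the $\tilde{G}_i$) or immediate from the first bullet (for the $D_i$). The only subtle point is arguing that the degree is exactly $n$ rather than at most $n$, which is handled by identifying the unique top-degree monomial ($-\tilde{G}_n$ in the first case, $-D_n$ in the second) that is not hit by the inductive sum.
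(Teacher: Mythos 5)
The paper offers no proof of this proposition at all: it is quoted verbatim from \cite[Lemma 9.7]{ter_alternating}, so there is nothing internal to compare against. Your argument is the natural self-contained one and is essentially correct, with two remarks. First, you do not need the detour through Lemma~\ref{lem:inverse} and Theorem~\ref{thm:DD}: the defining recursion \eqref{convolution} in Definition~\ref{def:Dt} already reads $D_n=-\sum_{k=0}^{n-1}D_k\star\tilde{G}_{n-k}=-\tilde{G}_n-\sum_{k=1}^{n-1}D_k\star\tilde{G}_{n-k}$, which is exactly the convolution identity your induction consumes (and avoids any appearance of circularity, since Lemma~\ref{lem:inverse} is proved later and for $\cD_n$ rather than $D_n$). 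Second, your non-cancellation step --- that the monomial $-\tilde{G}_n$ (resp.\ $-D_n$) survives because the remaining summands involve only $\tilde{G}_1,\ldots,\tilde{G}_{n-1}$ (resp.\ $D_1,\ldots,D_{n-1}$) --- tacitly assumes the $\tilde{G}_i$ are algebraically independent under $\star$; otherwise ``degree exactly $n$'' is not even well defined. This is true, and follows from the fact that $\{W_{-n}\}_{n\in\mN}$, $\{W_{n+1}\}_{n\in\mN}$, $\{\tilde{G}_{n+1}\}_{n\in\mN}$ form a PBW basis (\cite[Theorem 10.1]{ter_alternating}), so the ordered monomials in the $\tilde{G}_i$ alone are linearly independent; you should cite that explicitly rather than leave it implicit. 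With that reference added, your induction on $n$, the observation that the inductive hypothesis makes each summand homogeneous of degree $n$, and the triangularity that lets you invert the relationship to get the second bullet, together give a complete proof. Your point that the first bullet places every $D_n$ in the commutative $\star$-subalgebra generated by the $\tilde{G}_i$, so that polynomials in the $D_i$ make sense, is also correct and consistent with Proposition~\ref{prop:Dncomm}.
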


\begin{proposition}\label{prop:Dncomm}\rm
(See \cite[Lemma 9.10]{ter_alternating}.) For $n,m \in \mN$, 
\[
D_n \star \tilde{G}_m=\tilde{G}_m \star D_n, \hspace{4em} D_n \star D_m=D_m \star D_n. 
\]
\end{proposition}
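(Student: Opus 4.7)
The plan is to lift the commutativity statements to generating functions in two independent formal indeterminates $s,t$, exploiting two facts already established in the paper: (a) the $\tilde{G}_n$ mutually commute under $\star$ (by \cite[Proposition 5.10]{ter_alternating}), and (b) $D(t)$ is a two-sided $\star$-inverse of $\tilde{G}(t)$ (Lemma \ref{lem:Dtinverse}). Fact (a) immediately gives $\tilde{G}(s) \star \tilde{G}(t) = \tilde{G}(t) \star \tilde{G}(s)$, and fact (b) gives $\tilde{G}(s) \star D(s) = D(s) \star \tilde{G}(s) = \m1$ (and the analogous identity in $t$).

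For the first claim $D_n \star \tilde{G}_m = \tilde{G}_m \star D_n$, I would establish the stronger generating function identity $\tilde{G}(t) \star D(s) = D(s) \star \tilde{G}(t)$ by the sandwich
\[\tilde{G}(t) \star D(s) = D(s) \star \tilde{G}(s) \star \tilde{G}(t) \star D(s) = D(s) \star \tilde{G}(t) \star \tilde{G}(s) \star D(s) = D(s) \star \tilde{G}(t),\]
where the outer equalities use (b) and the middle equality uses (a). Extracting the coefficient of $s^n t^m$ yields the identity.

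For the second claim $D_n \star D_m = D_m \star D_n$, one clean route uses what we just proved:
\[D(s) \star D(t) \star \tilde{G}(t) = D(s) = D(t) \star \tilde{G}(t) \star D(s) = D(t) \star D(s) \star \tilde{G}(t),\]
where the final equality applies the generating function identity from the first claim. Right-multiplying the outer equality by $D(t)$ yields $D(s) \star D(t) = D(t) \star D(s)$, and extracting coefficients finishes. Alternatively, one can invoke Proposition \ref{prop:Dnpoly}: since $D_m$ is a polynomial in $\tilde{G}_1,\ldots,\tilde{G}_m$ and $D_n$ commutes with each $\tilde{G}_i$ by the first part, $D_n$ and $D_m$ commute. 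There is no substantial obstacle; the argument is the standard observation that a two-sided inverse of an element in a commutative subalgebra lies in, and commutes with, that commutative subalgebra. The only mild care needed is to work with two independent indeterminates so that coefficient extraction is unambiguous.
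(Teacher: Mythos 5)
Your argument is correct, but it is worth noting that the paper does not actually prove this proposition at all: it is quoted directly from \cite[Lemma 9.10]{ter_alternating} with no argument supplied. What you have written is therefore a genuinely self-contained derivation from ingredients already available in the paper, namely the mutual $\star$-commutativity of the $\{\tilde{G}_n\}_{n\in\mN}$ (cited from \cite[Proposition 5.10]{ter_alternating}) and the two-sided inverse relation of Lemma \ref{lem:Dtinverse}. The sandwich
$\tilde{G}(t)\star D(s)=D(s)\star\tilde{G}(s)\star\tilde{G}(t)\star D(s)=D(s)\star\tilde{G}(t)\star\tilde{G}(s)\star D(s)=D(s)\star\tilde{G}(t)$
is valid in the algebra of formal power series in two independent indeterminates over $U$ (associativity of $\star$ extends coefficientwise, and each coefficient of $s^nt^m$ is a finite sum, so coefficient extraction is legitimate), and the second claim follows either by your right-multiplication by $D(t)$ or, more quickly, from Proposition \ref{prop:Dnpoly} combined with the first claim. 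What your approach buys is independence from the external reference: the reader sees that the commutation relations are a formal consequence of invertibility inside a commutative subalgebra rather than a separately established fact. What the citation buys the paper is brevity and consistency, since \cite{ter_alternating} is already the source of the objects themselves. Either way, your proof is complete and has no gaps.
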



\begin{proposition}\label{cor:Dnrecursion1}\rm
For $n \geq 1$, 
\begin{equation}\label{eq:Dnrecursion1}
D_n=\frac{(q^{-1} D_{n-1} \star x-q x \star D_{n-1})y}{q-q^{-1}}.
\end{equation}
\end{proposition}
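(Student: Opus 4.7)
The plan is to observe that this proposition is essentially an immediate transcription of Proposition \ref{thm:Dnrecursion} once we invoke the identification $\cD_n = D_n$ established in Theorem \ref{thm:DD}. Nothing new needs to be computed.

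First I would recall that Proposition \ref{thm:Dnrecursion} asserts
\[
\cD_n=\frac{(q^{-1} \cD_{n-1} \star x-q x \star \cD_{n-1})y}{q-q^{-1}}
\]
for all $n \geq 1$. Next I would apply Theorem \ref{thm:DD}(ii), which tells us that $\cD_n = D_n$ and $\cD_{n-1} = D_{n-1}$. Substituting these equalities into both sides of the displayed recurrence yields exactly \eqref{eq:Dnrecursion1}.

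There is essentially no obstacle here: the real content was already carried out in Section 2, where the recurrence for $\cD_n$ was proved by a direct inner-product computation (Lemma \ref{lem:innerprod0}, Lemma \ref{lem:innerprod}, Lemma \ref{lem:Dvdecomp}, Proposition \ref{thm:Dnrecursion}), and where the inverse-generating-function argument (Lemma \ref{lem:inverse0}, Lemma \ref{lem:inverse}, Proposition \ref{prop:inverse}) combined with Lemma \ref{lem:Dtinverse} identified $\cD(t)$ with $D(t)$. Thus the proof reduces to citing Theorem \ref{thm:DD}(ii) and Proposition \ref{thm:Dnrecursion}, and the statement is best presented as a corollary of those two results.
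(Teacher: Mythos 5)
Your proposal is correct and matches the paper's own proof exactly: the paper also derives this proposition by combining Proposition \ref{thm:Dnrecursion} with the identification $\cD_n=D_n$ from Theorem \ref{thm:DD}. No further comment is needed.
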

\begin{proof}
Follows from Proposition \ref{thm:Dnrecursion} and Theorem \ref{thm:DD}. 
\end{proof}

\begin{proposition}\label{cor:Dnrecursion2}\rm
For $n \geq 1$, 
\[D_n=\frac{x(q^{-1}y \star D_{n-1}-qD_{n-1} \star y)}{q-q^{-1}}.\]
\end{proposition}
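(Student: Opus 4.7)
The plan is to obtain this recurrence by applying the antiautomorphism $\zeta$ of Definition \ref{def:zeta} to the recurrence \eqref{eq:Dnrecursion1} from Proposition \ref{cor:Dnrecursion1}. The key facts I would use are that $\zeta$ swaps the letters $x$ and $y$, that $\zeta$ is an antiautomorphism on both the free algebra $\mV$ (with the concatenation product) and on the $q$-shuffle algebra $(\mV,\star)$, and that $\zeta$ fixes $D_n$ for every $n \in \mN$ (this follows from the fact stated after Definition \ref{def:zeta} that $\zeta$ fixes $\cD_n$, together with Theorem \ref{thm:DD}(ii)).

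With these facts in hand, the computation goes as follows. Starting from
\[
D_n = \frac{(q^{-1} D_{n-1} \star x - q x \star D_{n-1})y}{q-q^{-1}},
\]
I would apply $\zeta$ to both sides. On the left, $\zeta(D_n) = D_n$. On the right, the outer concatenation by $y$ becomes, under the antiautomorphism property for concatenation, a left multiplication by $\zeta(y) = x$. Inside the parentheses, the antiautomorphism property for $\star$ gives
\[
\zeta(q^{-1} D_{n-1} \star x - q x \star D_{n-1}) = q^{-1}\, \zeta(x) \star \zeta(D_{n-1}) - q\, \zeta(D_{n-1}) \star \zeta(x) = q^{-1} y \star D_{n-1} - q D_{n-1} \star y,
\]
using $\zeta(x)=y$ and $\zeta(D_{n-1})=D_{n-1}$. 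Combining these two observations yields exactly
\[
D_n = \frac{x\bigl(q^{-1} y \star D_{n-1} - q D_{n-1} \star y\bigr)}{q-q^{-1}},
\]
which is the claim.

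There is essentially no hard step here; the only thing that needs care is the bookkeeping of signs, powers of $q$, and the order of factors when the antiautomorphism is applied. In particular, one must be careful that the antiautomorphism property $\zeta(A \star B) = \zeta(B) \star \zeta(A)$ reverses the order of the two summands inside the parentheses (so that the factor that was on the left of $\star x$ ends up on the right of $y \star$, and vice versa), which is precisely what produces the asymmetry between \eqref{eq:Dnrecursion1} and \eqref{eq:Dnrecursion2}. Once this is tracked correctly, the proof is a one-line application of $\zeta$.
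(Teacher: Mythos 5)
Your proof is correct and is exactly the paper's argument: apply the antiautomorphism $\zeta$ to both sides of \eqref{eq:Dnrecursion1}, using that $\zeta$ reverses both the concatenation and $\star$ products, swaps $x$ and $y$, and fixes $D_{n-1}$ and $D_n$. The bookkeeping of the reversed order of factors is handled correctly, so nothing is missing.
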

\begin{proof}
Apply the antiautomorphism $\zeta$ to each side of \eqref{eq:Dnrecursion1}, and note that $D_n$ is invariant under $\zeta$. 
\end{proof}

\noindent Recall that for a linear combination $A$ of words ending in $y$, the element $Ay^{-1}$ is obtained from $A$ by removing the rightmost $y$ of each word. We make a similar notation that for a linear combination $B$ of words starting with $x$, the element $x^{-1}B$ is obtained from $B$ by removing the leftmost $x$ of each word. 
\begin{proposition}\label{cor:Dnrecursion3}\rm
For $n \geq 2$, 
\begin{equation}\label{eq:Dnrecursion3}
x^{-1}D_ny^{-1}+D_{n-1}=\frac{q^{-1}x^{-1}D_{n-1} \star x-q^3x \star x^{-1}D_{n-1}}{q-q^{-1}}.
\end{equation}
\end{proposition}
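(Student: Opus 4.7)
The plan is to strip the leading $x$ from both sides of Proposition \ref{cor:Dnrecursion1}. Dividing that identity by the trailing $y$ gives $D_n y^{-1} = (q-q^{-1})^{-1}(q^{-1} D_{n-1} \star x - q\,x \star D_{n-1})$. For $n \geq 2$ the element $D_{n-1}$ is a linear combination of Catalan words of length $2(n-1) \geq 2$, each starting with $x$; by direct inspection of the $\star$-rule, both $D_{n-1} \star x$ and $x \star D_{n-1}$ are then supported on words starting with $x$, so the operation $x^{-1}$ makes sense when applied to the right-hand side.

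The technical heart of the argument is two leading-letter identities valid for any word $w = x a_2 \cdots a_m$ with $m \geq 2$ in which the numbers of $x$'s and $y$'s agree:
\[ x^{-1}(w \star x) = (x^{-1} w) \star x + w, \qquad x^{-1}(x \star w) = w + q^2 \bigl(x \star x^{-1} w\bigr). \]
Both follow from a term-by-term expansion of $\star$. For the second identity, the $i=0$ contribution to $x \star w$ (namely $xw$) strips to $w$, while each remaining term matches a corresponding term of $q^2(x \star x^{-1} w)$ because removing the leading $a_1 = x$ from a summand deletes exactly one factor $q^{\langle x, a_1\rangle} = q^2$ from its coefficient. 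The first identity is analogous; its bookkeeping produces an extra term $q^{\langle x,x\rangle + \langle x, a_2\rangle + \cdots + \langle x, a_m\rangle}\,w$, and the exponent collapses to $0$ because $w$ has equal $x$- and $y$-counts together with $\langle x,x\rangle + \langle x,y\rangle = 0$, so the correction is exactly $w$. Both identities extend linearly to $D_{n-1}$.

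Applying $x^{-1}$ to $D_n y^{-1}$ and substituting these two identities, the $D_{n-1}$ corrections combine to $\tfrac{q^{-1}-q}{q-q^{-1}}\,D_{n-1} = -D_{n-1}$; moving this term to the left-hand side gives the claimed identity. The main obstacle is the careful bookkeeping in the two leading-letter identities, particularly confirming that the first correction is precisely $w$ rather than a nontrivial $q$-multiple of $w$; this hinges on the balance between $x$'s and $y$'s. Once these identities are verified, the rest is elementary algebra, and the base case $n = 2$ (where $D_1 = -xy$ has length $2$) is automatically covered.
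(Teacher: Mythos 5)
Your proposal is correct and follows essentially the same route as the paper: the two leading-letter identities you establish are exactly the paper's decompositions $x \star D_{n-1}=xD_{n-1}+q^2x(x \star x^{-1}D_{n-1})$ and $D_{n-1} \star x=xD_{n-1}+x(x^{-1}D_{n-1} \star x)$ written in $x^{-1}$-applied form, and the combination with Proposition \ref{cor:Dnrecursion1} is identical. Your verification that the correction term carries coefficient $q^{0}=1$ (via the balance of $x$'s and $y$'s in a Catalan word) is the right justification for the step the paper leaves implicit.
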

\begin{proof}
By the definition of the $q$-shuffle product, we have 
\begin{equation*}\label{rec1}
x \star D_{n-1}=xD_{n-1}+q^2x(x \star x^{-1}D_{n-1}), 
\end{equation*}
\begin{equation*}\label{rec2}
D_{n-1} \star x=xD_{n-1}+x(x^{-1}D_{n-1} \star x). 
\end{equation*}
\medskip
The result follows from Proposition \ref{cor:Dnrecursion1} and the two equations above. 
\end{proof}

\begin{proposition}\label{cor:Dnrecursion4}\rm
For $n \geq 2$, 
\[x^{-1}D_ny^{-1}+D_{n-1}=\frac{q^{-1}y \star D_{n-1}y^{-1}-q^3D_{n-1}y^{-1} \star y}{q-q^{-1}}.\]
\end{proposition}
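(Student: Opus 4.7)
The plan is to apply the antiautomorphism $\zeta$ of the $q$-shuffle algebra to both sides of equation \eqref{eq:Dnrecursion3} in Proposition \ref{cor:Dnrecursion3}. This mirrors the derivation of Proposition \ref{cor:Dnrecursion2} from Proposition \ref{cor:Dnrecursion1}.

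The key bookkeeping step is to record two small identities. From Definition \ref{def:zeta} one has $\zeta(vy)=x\zeta(v)$ and $\zeta(xv)=\zeta(v)y$ for any word $v$, and by linearity
\[
\zeta(Ay^{-1})=x^{-1}\zeta(A),\qquad \zeta(x^{-1}B)=\zeta(B)y^{-1}
\]
whenever $A$ is a linear combination of words ending in $y$ and $B$ is a linear combination of words starting with $x$. Since $n\geq 2$, every Catalan word in the support of $D_n$ or $D_{n-1}$ starts with $x$ and ends with $y$, so both identities apply. Combined with the fact (noted after Definition \ref{def:zeta}) that $\zeta$ fixes each $D_m$, this yields $\zeta(x^{-1}D_{n-1})=D_{n-1}y^{-1}$ and $\zeta(x^{-1}D_ny^{-1})=x^{-1}D_ny^{-1}$; the latter can alternatively be seen by writing $E=x^{-1}D_ny^{-1}$, so that $D_n=xEy$ and hence $D_n=\zeta(D_n)=x\zeta(E)y$, forcing $\zeta(E)=E$.

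With these in hand, applying $\zeta$ to the left-hand side of \eqref{eq:Dnrecursion3} yields $x^{-1}D_ny^{-1}+D_{n-1}$, and applying $\zeta$ to the right-hand side, using the antiautomorphism rule $\zeta(A\star B)=\zeta(B)\star\zeta(A)$ together with $\zeta(x)=y$, converts the numerator $q^{-1}x^{-1}D_{n-1}\star x-q^3 x\star x^{-1}D_{n-1}$ into $q^{-1}y\star D_{n-1}y^{-1}-q^3 D_{n-1}y^{-1}\star y$. The scalar denominator $q-q^{-1}$ is unaffected. This is exactly the identity asserted in Proposition \ref{cor:Dnrecursion4}.

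The argument involves essentially no new computation; the only mildly delicate point is keeping the two ``deletion'' operations $x^{-1}(-)$ and $(-)y^{-1}$ straight and verifying that $\zeta$ swaps them on elements with appropriate support. Once those two identities are established, the result follows from a single application of $\zeta$, in exact parallel with the proof of Proposition \ref{cor:Dnrecursion2}.
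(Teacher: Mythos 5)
Your proposal is correct and is exactly the paper's argument: apply the antiautomorphism $\zeta$ to both sides of \eqref{eq:Dnrecursion3} and use that $\zeta$ fixes $D_n$ and $D_{n-1}$. The bookkeeping identities $\zeta(x^{-1}D_{n-1})=D_{n-1}y^{-1}$ and $\zeta(x^{-1}D_ny^{-1})=x^{-1}D_ny^{-1}$, which the paper leaves implicit, are verified correctly.
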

\begin{proof}
Apply the antiautomorphism $\zeta$ to each side of \eqref{eq:Dnrecursion3}, and note that $D_n$ is invariant under $\zeta$. 
\end{proof}

\noindent Next we mention some PBW bases for $U_q^+$ that involve $\{D_{n+1}\}_{n \in \mN}$. The readers may refer to \cite[Definition 2.1]{ter_alternating} for a formal definition of a PBW basis. 

\begin{proposition}\label{prop:DnPBW1}\rm
The elements $\{W_{-n}\}_{n \in \mN}$, $\{D_{n+1}\}_{n \in \mN}$, $\{W_{n+1}\}_{n \in \mN}$ form a PBW basis for $U_q^+$ in any linear order that satisfies one of the following: 
\begin{enumerate}
\item $W_{-i}<D_{j+1}<W_{k+1}$ for $i,j,k \in \mN$; 
\item $W_{k+1}<D_{j+1}<W_{-i}$ for $i,j,k \in \mN$; 
\item $W_{k+1}<W_{-i}<D_{j+1}$ for $i,j,k \in \mN$; 
\item $W_{-i}<W_{k+1}<D_{j+1}$ for $i,j,k \in \mN$; 
\item $D_{j+1}<W_{k+1}<W_{-i}$ for $i,j,k \in \mN$; 
\item $D_{j+1}<W_{-i}<W_{k+1}$ for $i,j,k \in \mN$. 
\end{enumerate}
\end{proposition}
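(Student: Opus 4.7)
The plan is to reduce the proposition to the $\tilde{G}$-analog of the statement, namely \cite[Theorem 10.1]{ter_alternating}, which (as stated in the introduction) asserts that $\{W_{-n}\}_{n \in \mN}$, $\{\tilde{G}_{n+1}\}_{n \in \mN}$, $\{W_{n+1}\}_{n \in \mN}$ form a PBW basis for $U_q^+$, and which we take to cover each of the six orderings (1)--(6) with $\tilde{G}_{j+1}$ in place of $D_{j+1}$. The substitution $\tilde{G} \leftrightarrow D$ will be powered by Propositions \ref{prop:Dnpoly} and \ref{prop:Dncomm}.

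First, by Proposition \ref{prop:Dncomm}, the family $\{D_{n+1}\}_{n \in \mN}$ mutually commutes under $\star$, so ordered monomials in the $D_{n+1}$ are well-defined; and by Proposition \ref{prop:Dnpoly}, the subalgebra of $U_q^+$ generated by $\{D_{n+1}\}_{n \in \mN}$ coincides with the subalgebra generated by $\{\tilde{G}_{n+1}\}_{n \in \mN}$. Moreover, unwinding the recursion \eqref{convolution} and arguing by induction on $n$ shows that $D_n = -\tilde{G}_n + p_n(\tilde{G}_1,\ldots,\tilde{G}_{n-1})$ for some polynomial $p_n$, since each summand $D_k \star \tilde{G}_{n-k}$ with $1 \le k \le n-1$ already lies in $\mF[\tilde{G}_1,\ldots,\tilde{G}_{n-1}]$. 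Grading $\tilde{G}_i$ and $D_i$ both by weight $i$, the change of variables is therefore homogeneous and upper-triangular with $\pm 1$ on the diagonal. Hence, within each weight-graded piece of the common commutative subalgebra, the transition matrix between ordered $\tilde{G}$-monomials and ordered $D$-monomials is invertible, so the ordered $D$-monomials form an $\mF$-basis of that subalgebra exactly when the corresponding ordered $\tilde{G}$-monomials do.

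Finally, for each ordering in (1)--(6), the $\tilde{G}$-version of \cite[Theorem 10.1]{ter_alternating} provides a vector-space decomposition of $U_q^+$ as the $\star$-product of three commutative factors $\mF\langle W_{-n}\rangle$, $\mF\langle \tilde{G}_{n+1}\rangle$, and $\mF\langle W_{n+1}\rangle$, arranged in the dictated order, with a basis of $U_q^+$ given by the ordered monomials of the three factors multiplied together. Substituting the $D$-basis for the $\tilde{G}$-basis inside the $\tilde{G}$-factor is an invertible linear operation on that factor and preserves this three-factor decomposition by the previous paragraph, yielding the desired PBW basis with $D_{j+1}$'s for each of the six orderings. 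The main conceptual obstacle is verifying the triangularity of the $\tilde{G} \leftrightarrow D$ transition and setting up the grading so that the change of basis factors neatly through the commutative subalgebra; once this is in hand, exchanging a basis inside a single commutative factor of an existing PBW decomposition is formal.
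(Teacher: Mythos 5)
Your proposal is correct and takes essentially the same route as the paper, whose proof consists precisely of citing \cite[Theorem 10.1]{ter_alternating} together with Propositions \ref{prop:Dnpoly} and \ref{prop:Dncomm}. You have simply made explicit the unitriangularity of the $\tilde{G} \leftrightarrow D$ change of variables and the basis-exchange inside the commutative factor, details the paper leaves to the reader.
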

\begin{proof}
Follows from \cite[Theorem 10.1]{ter_alternating} and Propositions \ref{prop:Dnpoly}, \ref{prop:Dncomm}. 
\end{proof}

\begin{proposition}\label{prop:DnPBW2}\rm
The elements $\{E_{n\delta+\alpha_0}\}_{n \in \mN}$, $\{D_{n+1}\}_{n \in \mN}$, $\{E_{n\delta+\alpha_1}\}_{n \in \mN}$ form a PBW basis for $U_q^+$ in the following linear order: 
\[E_{\alpha_0}<E_{\delta+\alpha_0}<E_{2\delta+\alpha_0}<\cdots<D_1<D_2<D_3<\cdots<E_{2\delta+\alpha_1}<E_{\delta+\alpha_1}<E_{\alpha_1}.\]
\end{proposition}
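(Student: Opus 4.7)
The plan is to reduce to Damiani's PBW theorem by substituting $\{D_{n+1}\}_{n \in \mathbb{N}}$ for the imaginary root vectors $\{E_{(n+1)\delta}\}_{n \in \mathbb{N}}$ in the Damiani PBW basis. Damiani showed that $\{E_{n\delta+\alpha_0}\}_{n \in \mathbb{N}}$, $\{E_{(n+1)\delta}\}_{n \in \mathbb{N}}$, $\{E_{n\delta+\alpha_1}\}_{n \in \mathbb{N}}$ form a PBW basis for $U_q^+$ in precisely the given linear order (with the imaginary root vectors sitting as a mutually commuting block in the middle). The goal is therefore to show that replacing the middle family by $\{D_{n+1}\}_{n \in \mathbb{N}}$ preserves the PBW property.

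The first step is to record the triangular polynomial change of variables between $\{\tilde{G}_{n+1}\}_{n \in \mathbb{N}}$ and $\{E_{(n+1)\delta}\}_{n \in \mathbb{N}}$, which is the content of \cite[Section~11]{ter_alternating}: one has $\tilde{G}_{n+1}$ expressible as a polynomial in $E_\delta,\ldots,E_{(n+1)\delta}$ of weighted degree $n+1$ (with $E_{(i+1)\delta}$ of degree $i+1$) in which the coefficient of $E_{(n+1)\delta}$ is a nonzero scalar, and conversely. Composing this with Proposition \ref{prop:Dnpoly} gives that $D_{n+1}$ is a polynomial of weighted degree $n+1$ in $E_\delta,\ldots,E_{(n+1)\delta}$ whose coefficient of $E_{(n+1)\delta}$ is nonzero, and conversely $E_{(n+1)\delta}$ is such a polynomial in $D_1,\ldots,D_{n+1}$. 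So the families $\{D_{n+1}\}_{n \in \mathbb{N}}$ and $\{E_{(n+1)\delta}\}_{n \in \mathbb{N}}$ generate the same commutative subalgebra of $U_q^+$ and the change of basis between ordered monomials of matching total degree is triangular with invertible diagonal.

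The second step assembles the PBW basis. By Proposition \ref{prop:Dncomm} the elements $\{D_{n+1}\}_{n \in \mathbb{N}}$ mutually commute, and by the triangularity above any ordered monomial in the $E_{(n+1)\delta}$'s can be written as an $\mathbb{F}$-linear combination of ordered monomials in the $D_{n+1}$'s of equal or smaller length, and vice versa. Sandwiching this change of basis between the factors built from $\{E_{n\delta+\alpha_0}\}_{n \in \mathbb{N}}$ on the left and $\{E_{n\delta+\alpha_1}\}_{n \in \mathbb{N}}$ on the right (both left fixed), the Damiani PBW basis is transformed into the proposed set, which is therefore also a PBW basis in the stated linear order.

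The main obstacle is providing the precise triangular polynomial relation between $\{\tilde{G}_{n+1}\}_{n \in \mathbb{N}}$ and $\{E_{(n+1)\delta}\}_{n \in \mathbb{N}}$; one must either cite the relevant statement from Section~11 of \cite{ter_alternating} or reconstruct the invertibility of the transition from the recursive definitions. Once that link is in hand, the remainder is a routine PBW substitution argument that parallels the proof of Proposition \ref{prop:DnPBW1}.
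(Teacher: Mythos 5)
Your proposal is correct and takes essentially the same route as the paper, which likewise reduces to Damiani's PBW theorem by an invertible triangular substitution of $\{D_{n+1}\}_{n\in\mathbb{N}}$ for the commuting imaginary root vectors, citing \cite[Section 5]{damiani}, \cite[Theorem 1.7]{ter_catalan}, \cite[Proposition 11.9]{ter_alternating} (which supplies exactly the change of variables you identify as the main obstacle), together with Proposition \ref{prop:Dncomm}. The only cosmetic difference is that you route the change of variables through $\tilde{G}(t)$ and Proposition \ref{prop:Dnpoly}, whereas the paper invokes the direct relation between $E_{(n+1)\delta}$ and $D_{n+1}$ from \cite[Proposition 11.9]{ter_alternating}.
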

\begin{proof}
Follows from \cite[Section 5]{damiani}, \cite[Theorem 1.7]{ter_catalan}, \cite[Proposition 11.9]{ter_alternating}, and Proposition \ref{prop:Dncomm}. 
\end{proof}

\section{Acknowledgments}
The author is currently a Math Ph.D. student at the University of Wisconsin-Madison. The author would like to thank his supervisor, Professor Paul Terwilliger, for suggesting the paper topic and giving many helpful comments. The author would like to thank Pascal Baseilhac for comments on this paper. The author would also like to thank his high school Math teacher, Yuefeng Feng, for guiding the author through an early tour of the fascinating world of combinatorics.

\bigskip
\noindent Chenwei Ruan \\
Department of Mathematics \\
University of Wisconsin \\
480 Lincoln Drive \\
Madison, WI 53706-1388 USA \\
email: {\tt cruan4@wisc.edu }

\end{document}